\newtheorem{theorem}{Theorem}[section]
\newtheorem{lemma}[theorem]{\bf Lemma}
\newtheorem{cor}[theorem]{Corollary}
\newtheorem{example}[theorem]{Example}
\newtheorem{remark}[theorem]{\textbf{Remark}}
\numberwithin{equation}{section}
\newcommand{\R}{\mathbb R}
\newcommand{\C}{\mathbb{C}}
\newcommand{\D}{\partial}
\newcommand{\Dbar}{\bar{\partial}}
\newcommand{\al}{\alpha}
\newcommand{\ep}{\varepsilon}
\DeclareMathOperator{\supp}{supp}
\DeclareMathOperator{\diam}{diam}
\DeclareMathOperator{\re}{Re}
\DeclareMathOperator{\im}{Im}
\newcommand{\grad}{\nabla}
\DeclareMathOperator{\Div}{div}
\def\Xint#1{\mathchoice
   {\XXint\displaystyle\textstyle{#1}}%
   {\XXint\textstyle\scriptstyle{#1}}%
   {\XXint\scriptstyle\scriptscriptstyle{#1}}%
   {\XXint\scriptscriptstyle\scriptscriptstyle{#1}}%
   \!\int}
\def\XXint#1#2#3{{\setbox0=\hbox{$#1{#2#3}{\int}$}
     \vcenter{\hbox{$#2#3$}}\kern-.5\wd0}}
\def\avgint{\Xint-}
\begin{document}

\title[Regularity results for elliptic PDEs]{Regularity results for weak
  solutions of elliptic PDEs below the natural exponent}

\author{David Cruz-Uribe, SFO}
\address{David Cruz-Uribe, SFO\\
Dept. of Mathematics \\ Trinity College \\
Hartford, CT 06106-3100, USA} 
\email{david.cruzuribe@trincoll.edu}

\author{Kabe Moen}
\address{Kabe Moen \\ 
Department of Mathematics \\
 University of Alabama \\
 Tuscaloosa, AL 35487, USA}
\email{kabe.moen@ua.edu}

\author{Scott Rodney}
\address{Scott Rodney \\
Department of Mathematics, Physics and Geology \\
 Cape Breton University\\
Sydney, NS B1P6L2, Canada}

\email{scott\_rodney@cbu.ca}



\thanks{The first author is supported by the Stewart-Dowart faculty
  development fund at Trinity College and NSF Grant 1362425.  The second author is supported
  by NSF Grant 1201504.  The third author is supported by the NSERC
  Discovery Grant program.  The authors would like to thank Carlo
  Sbordone for originally suggesting this problem and for his
  hospitality to the first author.  They would also like to thank
  Doyoon Kim for pointing out a mistake in an earlier version of this article.}

\subjclass{35B45,35J15,42B37,46E35}

\keywords{elliptic PDEs, regularity, weak solutions, regularity} 

\date{November 4, 2014}
\begin{abstract}
We prove regularity estimates for the weak solutions to the
Dirichlet problem for a divergence form elliptic operator.  We give
$L^p$ estimates for the second derivative for $p<2$.  Our work
generalizes results due to Miranda~\cite{MR0170090}.
\end{abstract}

\maketitle

\section{Introduction}

In this paper we consider the regularity of solutions to the
divergence form elliptic equation
\begin{equation} \label{eqn:main1}
\begin{cases}
Lu = -\Div A\grad u = f & \text{in } \Omega\\
\;\;u = 0  &  \text{on } \partial \Omega
\end{cases}
\end{equation}
where $\Omega \subset \R^n$, $n\geq 2$, is a bounded open set whose
boundary $\partial \Omega$ is $C^2$, and $A=A(x)=(a_{ij}(x))$ is an $n\times
n$  matrix of real-valued, measurable functions that satisfies the ellipticity
condition
\begin{equation}
 \label{E1} \lambda |\xi|^2 \leq\langle A\xi,\xi \rangle \leq \Lambda |\xi|^2, 
\qquad  0< \lambda < \Lambda, \quad \xi \in \R^n. 
\end{equation}
We derive $L^p$ estimates, $p<2$, for solutions of this equation when $A$ has
discontinuous coefficients and $f\in L^p(\Omega)$.

This and
related problems have a long history.  If $A$ is continuous and
$\partial\Omega$ is $C^{2,\alpha}$, then
these results are classical:  see Gilbarg and
Trudinger~\cite{gilbarg-trudinger01}.   Miranda~\cite{MR0170090} showed
that if $n\geq 3$, $\partial \Omega$ is $C^3$, and $A\in W^{1,n}(\Omega)$, then any weak solution of
$L u = f$, $f\in L^q(\Omega)$, $q\geq 2$, is a strong solution and
$\|D^2u \|_{L^2(\Omega)} \leq C(\|f\|_{L^q(\Omega)}+\|u\|_{L^1(\Omega)})$.
This result is false when $n=2$: for a counter-example, see Example~\ref{example:n=2}
below.

A similar problem for non-divergence form elliptic operators was
considered by Chiarenza and Franciosi~\cite{MR1174821}.  They proved that if
$n\geq 3$, $\Omega$ is bounded and $\partial \Omega$ is $C^{2}$, then the
non-divergence form equation $\mathrm{tr}(A D^2u) =f$, with $f\in L^2(\Omega)$ and
$A$ in a certain vanishing Morrey class (a generalization of
$VMO$), has a unique solution $u$ satisfying $\|u\|_{W^{2,2}(\Omega)} \leq
C\|f\|_{L^2(\Omega)}$.  This was generalized by Chiarenza, Frasca and
Longo~\cite{MR1088476}, who showed that if $f\in L^p$, $1<p<\infty$,
then the same equation has a unique solution satisfying $\|u\|_{W^{2,p}(\Omega)} \leq
C\|f\|_{L^p(\Omega)}$.  These results in turn were further generalized by
Vitanza~\cite{MR1229461,MR1320669,MR1283360}.

Divergence form equations of the form $\Div A\grad u = \Div F$ were considered by 
Di~Fazio~\cite{MR1405255} on bounded domains with  $\partial \Omega
\in C^{1,1}$ and Iwaniec and Sbordone~\cite{MR1631658} on $\R^n$; they showed that if $A\in VMO$, then 
there exists a unique weak solution that  satisfies
$\|\grad u \|_{L^p(\Omega)} \leq C\|F\|_{L^p(\Omega)}$,  $1<p<\infty$.
 The results for bounded domains were improved by Auscher and
Qafsaoui~\cite{MR1911202}, who showed that it suffices to assume
$\partial \Omega$ is~$C^1$ and that $A$ does not need to be real
symmetric.  For a generalization to nonlinear equations,
see~\cite{MR1489454}.   N.~Meyers considered the more general equation
$\Div A\grad u = \Div F+f$ on a bounded domain with a smooth
boundary.  He showed that if $A$ satisfies~\eqref{E1}, then there
exists $p_0<2$ such that for all $p_0<p<p_0'$, there exists a weak
solution that satisfies $\|\grad u \|_{L^p(\Omega)} \leq
C(\|F\|_{L^p(\Omega)}+\|f\|_{L^p(\Omega)})$.  (See
Theorem~\ref{thm:Meyers} below.)

\bigskip

Our main result is a generalization of the result of Miranda to
$p< 2$ and $n\geq 2$.    

\begin{theorem} \label{thm:main-result}
Let $\Omega \subset \R^n$,
  $n\geq 2$, be a bounded open set such that $\partial \Omega$ is $C^1$.
Let  $A$ be an $n\times n$ real-valued matrix that
satisfies~\eqref{E1}.  If $A\in
  W^{1,n}(\Omega)$, then there exists $p_0\in(1,2)$ so that for all
  $p\in(p_0,2)$ and $f\in L^p(\Omega)$ there exists a unique solution $u$ of
  \eqref{eqn:main1} that satisfies a local regularity estimate:
 given any open set $\Omega '\Subset \Omega$, 
\begin{equation} \label{eqn:mainNEW} 
\|D^2 u \|_{L^p(\Omega')} \leq D^{-1}C\|f\|_{L^p(\Omega)},
\end{equation}
where $C$ is independent of both $u$ and $f$ and $D=d(\Omega',\partial\Omega)$. 
If we further assume that $\partial \Omega$ is $C^2$, then 
\begin{equation} \label{eqn:main2}
 \|D^2 u \|_{L^p(\Omega)} \leq C\|f\|_{L^p(\Omega)}.
\end{equation}
where $C$ is independent of both $u$ and $f$.
\end{theorem}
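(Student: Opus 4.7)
My strategy would be to transform the divergence-form equation into a non-divergence form equation and then apply the $W^{2,p}$ theory of Chiarenza--Frasca--Longo for operators with VMO coefficients, exploiting the Brezis--Nirenberg embedding $W^{1,n}(\Omega)\hookrightarrow\mathrm{VMO}(\Omega)$. Expanding the divergence,
\[
Lu=-\sum_{i,j}a_{ij}\,\partial_i\partial_j u-\sum_{i,j}(\partial_i a_{ij})\,\partial_j u,
\]
so that, as distributions,
\[
-\sum_{i,j}a_{ij}\,\partial_i\partial_j u=f+\sum_{i,j}(\partial_i a_{ij})\,\partial_j u.
\]
Existence and uniqueness of a weak solution $u\in W^{1,p}_{0}(\Omega)$ with $\|\grad u\|_{L^p(\Omega)}\le C\|f\|_{L^p(\Omega)}$, for $p$ in a Meyers range $(p_0^M,2)$, is supplied by Theorem~\ref{thm:Meyers}. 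The remaining task is to upgrade this $W^{1,p}$ regularity to a $W^{2,p}$ estimate.

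Next, I would apply the interior $W^{2,p}$ estimate of Chiarenza--Frasca--Longo to the frozen-principal-part operator on a small ball $B=B_r(x_0)\Subset\Omega$, giving
\[
\|D^2 u\|_{L^p(B)}\le C\!\left(\|f\|_{L^p(\Omega)}+\Big\|\sum_{i,j}(\partial_i a_{ij})\,\partial_j u\Big\|_{L^p(2B)}+\|u\|_{L^p(2B)}\right).
\]
The cross term is handled by H\"older with exponents $n$ and $p^*=np/(n-p)$:
\[
\Big\|\sum_{i,j}(\partial_i a_{ij})\,\partial_j u\Big\|_{L^p(2B)}\le\|\grad A\|_{L^n(2B)}\,\|\grad u\|_{L^{p^*}(2B)},
\]
and the Gagliardo--Nirenberg--Sobolev inequality (together with cutoffs) controls $\|\grad u\|_{L^{p^*}(2B)}$ by a combination of $\|D^2u\|_{L^p(3B)}$ and $\|\grad u\|_{L^p(3B)}$. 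Since $\grad A\in L^n(\Omega)$, absolute continuity of the $L^n$ integral lets me choose $r_0>0$ with $C\|\grad A\|_{L^n(B)}<\tfrac12$ for every ball of radius $\le r_0$; a standard Campanato-type iteration on nested balls then absorbs the $\|D^2u\|_{L^p}$ term into the left-hand side. Covering $\Omega'$ by a finite family of such small balls — the cardinality depending on $D=d(\Omega',\partial\Omega)$ — and summing yields \eqref{eqn:mainNEW}, with the remaining $W^{1,p}$ norms on the right absorbed into $\|f\|_{L^p(\Omega)}$ via Meyers and Poincar\'e.

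The global estimate \eqref{eqn:main2} under $\partial\Omega\in C^2$ follows by the same scheme near the boundary, after locally straightening $\partial\Omega$ by a $C^2$ diffeomorphism (which preserves the $W^{1,n}$ and VMO properties of $A$) and invoking the boundary version of the Chiarenza--Frasca--Longo estimate on half-balls. The step I expect to be the main obstacle is the absorption/iteration: the Chiarenza--Frasca--Longo constant depends on the VMO modulus of $A$ at the scale of the ball, while the factor $\|\grad A\|_{L^n(2B)}$ must simultaneously be made small; both rely on shrinking $r$, so one must order the quantifiers carefully — fix a VMO modulus small enough to control the CFL constant, then shrink $r$ further to make $\|\grad A\|_{L^n(2B)}$ small enough for the absorption. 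This juggling, together with the constraint $p^*<\infty$ (i.e.\ $p<n$) and the Meyers range, is what forces $p_0$ to lie strictly in $(1,2)$.
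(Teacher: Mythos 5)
Your proposal is correct in outline but follows a genuinely different route from the paper. The paper does \emph{not} pass to non-divergence form or invoke Chiarenza--Frasca--Longo. Instead, it uses Meyers' coercivity estimate (Theorem~\ref{thm:coercive}) in a duality argument: applying the coercivity to $\eta u_{x_s}$ produces a competitor $v$ with $\|v\|_{W^{1,p'}_0}=1$, and after two integrations by parts the problematic term becomes $\int_\Omega |\eta(\nabla A)\nabla u\cdot\nabla v|\,dx$, which is then controlled by the weighted Sobolev (Fefferman--Phong) inequality of Lemma~\ref{lemma:CFlemma}, built on two-weight Riesz potential bounds. The absorption of $\|\eta D^2u\|_{L^p}$ is effected by splitting $U=|\nabla A|$ into a piece with small $L^n$ norm and a bounded piece, using only absolute continuity of the $L^n$ integral on a sublevel set --- no covering by small balls is needed and the cutoff $\eta$ is chosen once, which is what gives the clean $D^{-1}$ factor in \eqref{eqn:mainNEW}. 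Your route instead relies on the CFL a priori estimate (with constants controlled by the VMO modulus) plus Sobolev/absorption on small balls plus a covering argument, and the smallness of $\|\nabla A\|_{L^n}$ is arranged by shrinking radii rather than by truncating $U$. Both mechanisms exploit the same quantity --- smallness of $\|\nabla A\|_{L^n}$ locally --- but the paper's version tracks constants explicitly, which is essential for the $n=2$ refinements in Theorems~\ref{thm:n=2-special} and~\ref{thm:n=2-off-diag}, where Lemma~\ref{lemma:CFlemma} is replaced by Orlicz and Morrey variants. Your scheme would not transfer to those endpoint cases as readily.

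Two caveats on your writeup. First, like the paper, you must establish the estimate first for smooth $A$ and $f$ (so that $u\in W^{2,p}_{\mathrm{loc}}$ and the CFL a priori estimate is legitimately applicable) and then run a double approximation; you do not mention this, and without it the argument is circular, since you need $\nabla u\in L^{p^*}$ --- a $W^{2,p}$-level fact --- before you can even place $\sum(\partial_i a_{ij})\partial_j u$ in $L^p$. The paper carries out this approximation carefully, and the approximation in $A$ itself requires a further compactness argument (weak convergence of $D^2u_j$ plus the $W^{1,n}\to L^n$ convergence of $A_j$ combined with Sobolev on $\nabla u_j$) that is not automatic. Second, the dependence on $D=d(\Omega',\partial\Omega)$ from a ball-covering argument is not obviously $D^{-1}$: when $D$ is small you are forced to take balls of radius comparable to $D$, and the cutoff derivatives contribute $D^{-1}$ factors inside each ball while the number of balls also depends on $D$, so you would need to track this carefully to recover the stated power. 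The paper's single-cutoff argument avoids this issue entirely.
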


\begin{remark}
 To compare Theorem~\textup{\ref{thm:main-result}} to the work of Di Fazio
  {\em et al.} described above, note that if $A\in W^{1,n}$ then $A\in
  VMO$: see, for instance,~\cite{MR1650446}.
\end{remark}

\begin{remark} 
Our techniques actually allow us to assume that $A$ is a complex matrix that satisfies
\[  |\langle A\xi,\eta\rangle|\leq \Lambda |\xi||\eta|, 
\qquad \lambda|\xi|^2\leq\mathrm{Re}\langle A\xi,\xi\rangle, \quad
\xi,\eta \in \C^n. \]
Details are left to the interested reader.
\end{remark}

\medskip

The lower bound $p_0$ in Theorem~\ref{thm:main-result} is intrinsic to
our method of proof.   It is an open question
whether our results can be extended to the full range $1<p<2$.  The stronger
assumptions on the boundary to get global regularity in Theorem~\ref{thm:main-result} are not
unexpected:  there exist examples that show that for $n\geq 2$ and $p>1$, there
exists a bounded $C^1$ domain $\Omega$ and $f\in
C^\infty(\overline{\Omega})$ such that the solution $u$ to $\Delta u =
f$ in $\Omega$, $u=0$ on $\partial \Omega$, is not in
$W^{2,1}(\Omega)$.  (See~\cite{MR0544584,MR1331981}.)

\medskip

When $n\geq 3$, an examination of the constants shows that we can
take $p=2$ in our proof.    This lets us give a new proof of the result of
Miranda referred to above, as well as a local regularity result.

\begin{cor} \label{cor:miranda}
Let $\Omega \subset \R^n$,
  $n\geq 3$, be a bounded open set such that $\partial \Omega$ is $C^1$.
Let  $A$ be an $n\times n$ real-valued matrix that
satisfies~\eqref{E1}.  If $A\in
  W^{1,n}(\Omega)$, then for all
 $f\in L^2(\Omega)$, there exists a unique solution $u$ of
  \eqref{eqn:main1} that satisfies
\[
 \|D^2 u \|_{L^2(\Omega')} \leq D^{-1}C\|f\|_{L^2(\Omega)},
\]
where $C$ is independent of both $u$ and $f$, $\Omega ' \Subset
\Omega$ and $D=d(\Omega',\partial\Omega)$.  If we further assume that
$\partial \Omega$ is $C^2$, then 
\[
 \|D^2 u \|_{L^2(\Omega)} \leq C\|f\|_{L^2(\Omega)}.
\]
\end{cor}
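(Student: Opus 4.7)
The plan is to revisit the proof of Theorem~\ref{thm:main-result} with $p=2$ and verify that every constant remains finite, and in particular that the absorption step closes, provided $n\geq 3$. The structure of the argument follows the standard route: since $A\in W^{1,n}(\Omega)$, one rewrites \eqref{eqn:main1} in non-divergence form using
\[
-\Div(A\nabla u) = -a_{ij}\partial_{ij}u - (\partial_j a_{ij})\partial_i u,
\]
so that $u$ weakly satisfies $\mathrm{tr}(AD^2u) = -f - (\Div A^T)\cdot \nabla u$. Since $W^{1,n}\subset VMO$ (noted in the remark after Theorem~\ref{thm:main-result}), one then invokes the Chiarenza-Frasca-Longo $L^p$ theory for non-divergence equations with $VMO$ coefficients to bound $\|D^2u\|_{L^2}$ by $\|f\|_{L^2} + \|\nabla A\cdot \nabla u\|_{L^2}$, and absorbs the second term on the right via a smallness argument on small balls.

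The only step where the constraint $p<2$ is genuinely used in the main theorem is the Hölder/Sobolev chain
\[
\|\nabla A\cdot\nabla u\|_{L^p(B_r)} \leq \|\nabla A\|_{L^n(B_r)}\|\nabla u\|_{L^{np/(n-p)}(B_r)} \leq C\|\nabla A\|_{L^n(B_r)}\|D^2 u\|_{L^p(B_r)},
\]
where the second inequality is the Sobolev embedding $W^{1,p}\hookrightarrow L^{np/(n-p)}$ combined with control of the lower-order $\|\nabla u\|_{L^p}$ term via Poincaré (using $u=0$ on $\partial\Omega$). This embedding is valid exactly when $p<n$, so at $p=2$ it requires precisely $n\geq 3$. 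The factor $\|\nabla A\|_{L^n(B_r)}$ can be made arbitrarily small by shrinking $r$ via absolute continuity of the $L^n$ integral, and the relevant Sobolev constant behaves correctly under scaling; hence the absorption succeeds and one obtains the local estimate on each small ball.

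All remaining ingredients transcribe verbatim from the proof of Theorem~\ref{thm:main-result}: existence and uniqueness via Lax-Milgram in $W_0^{1,2}(\Omega)$; a Meyers-type reverse-Hölder improvement (Theorem~\ref{thm:Meyers}) giving $\nabla u\in L^q$ for some $q>2$, which supplies the starting point at $p=2$; and the passage from ball-by-ball local estimates to the estimates on $\Omega'$ or $\Omega$ by a partition of unity, together with a flattening-of-the-boundary step (requiring $\partial\Omega\in C^1$ for the local result and $\partial\Omega\in C^2$ for the global one). I expect the main obstacle to be the bookkeeping at the boundary: after flattening, one must check that the transformed coefficient matrix still lies in $W^{1,n}$ with controlled norm, and that the Sobolev and Hölder constants in the display above survive the change of variables uniformly in the covering. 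Away from the boundary the argument is essentially automatic, which already yields the local estimate, and the global estimate under the stronger $C^2$ hypothesis then follows by the same boundary-flattening scheme used in Theorem~\ref{thm:main-result}.
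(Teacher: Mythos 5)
You correctly read the corollary as the $p=2$ specialization of Theorem~\ref{thm:main-result} and correctly locate the obstruction: a Sobolev-type step requiring $p<n$, hence $n\geq 3$ at $p=2$. This is exactly where the paper's constant $(p'-n')^{-1/p'}$ in Lemma~\ref{lemma:CFlemma} remains finite, so your diagnosis of \emph{where} the restriction lives matches the paper. However, your proposed mechanism is not the paper's. You would rewrite the equation in non-divergence form and import the Chiarenza--Frasca--Longo $W^{2,p}$ theory for VMO coefficients, then absorb $\|\nabla A\cdot\nabla u\|_{L^2}$ by smallness of $\|\nabla A\|_{L^n}$ on small balls. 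The paper instead stays in divergence form throughout: it uses Meyers coercivity (Theorem~\ref{thm:coercive}) to produce a dual $v\in W_0^{1,p'}(\Omega)$ of unit norm with $|\mathfrak{a}(\eta u_{x_s},v)| \geq \kappa\|\eta u_{x_s}\|_{W_0^{1,p}}$, tests the equation against $\eta v_{x_s}$, integrates by parts twice, and controls the $\nabla A\cdot\nabla u$ term via the weighted Fefferman--Phong inequality of Lemma~\ref{lemma:CFlemma}, after splitting $|\nabla A|$ into a small-$L^n$ tail and a bounded part. The paper's route is self-contained and avoids CFL as a black box; yours is more off-the-shelf but would need two points addressed to be airtight: (i) CFL is an a priori estimate for strong solutions, while the Dirichlet problem only gives a weak solution a priori; the paper closes this by first proving the estimate for smooth $A$ and $f$ (so $u$ is classically regular) and then passing to the limit by a double approximation, and your route needs the same step; (ii) the local estimate in the corollary is purely interior and involves no boundary-flattening, so the $C^1$ hypothesis there feeds only into Meyers' existence and coercivity (Theorems~\ref{thm:coercive} and~\ref{thm:Meyers}), not into a local CFL estimate as your proposal suggests.
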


\medskip

We now consider the case $p=n=2$.  In this case,  Corollary~\ref{cor:miranda} is false, as the next
example shows.  

\begin{example} \label{example:n=2} 
  Let $B=B_{1/2}(0)\subset \R^2$ be the open ball of radius $1/2$
  centered at the origin.  Then there exists a matrix $A\in W^{1,2}(B)$ satisfying
  \eqref{E1} and a solution to 
\[ -\Div(A\nabla u)=0 \]
such that $u\in W^{2,p}(B)$ for all $p<2$, but $u\notin W^{2,2}(B)$.
\end{example}

We can adapt the proofs of Theorem~\ref{thm:main-result} to the case $p=n=2$ if we  assume that
$\grad A$ satisfies stronger integrability conditions.   We
state these in the scale of Orlicz spaces---for a precise definition, see
Section~\ref{section:prelim} below.   For brevity we only state the
global regularity result.

\begin{theorem} \label{thm:n=2-special}
Let $\Omega \subset \R^2$ be a bounded open set such that $\partial \Omega$ is $C^2$.
Let  $A$ be a $2\times 2$ real-valued matrix that
satisfies~\eqref{E1}.  Suppose further that for some $\delta>0$,
\begin{equation} \label{eqn:LlogL-hyp}
\|\nabla A\|_{L^2(\log L)^{1+\delta}(\Omega)}<\infty.
\end{equation}
If $f\in L^2(\Omega)$ then there exists a unique solution $u$ of
\eqref{eqn:main1} that 
satisfies
$$\|D^2u\|_{L^2(\Omega)}\leq C \|\nabla A\|_{L^2(\log L)^{1+\delta}(\Omega)} \|f\|_{L^2(\Omega)}.$$
\end{theorem}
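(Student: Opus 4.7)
The plan is to adapt the proof of Theorem~\ref{thm:main-result} to the borderline case $p=n=2$, replacing the classical Sobolev embedding (which degenerates there because its target Lebesgue exponent is $\infty$) with the Moser--Trudinger embedding into an exponential Orlicz space, and using the stronger hypothesis \eqref{eqn:LlogL-hyp} precisely so that H\"older duality in the Zygmund scale can close the argument. Existence and uniqueness of the weak solution $u\in W^{1,2}_0(\Omega)$ follow from Lax--Milgram applied to the bilinear form $\langle A\nabla u,\nabla v\rangle$, and the standard energy estimate gives $\|\nabla u\|_{L^2(\Omega)}\leq C\|f\|_{L^2(\Omega)}$. The $W^{2,2}$ regularity and the quantitative estimate are established via smooth approximation of $A$ and $f$.

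As in Theorem~\ref{thm:main-result}, rewrite the equation in non-divergence form,
\[
-\sum_{i,j} a_{ij}\,\partial_i\partial_j u \;=\; f \;+\; \sum_{i,j}(\partial_i a_{ij})\,\partial_j u \;=:\; \tilde f.
\]
Since $\nabla A\in L^2(\log L)^{1+\delta}(\Omega)\subset L^2(\Omega)=L^n(\Omega)$, the remark after Theorem~\ref{thm:main-result} gives $A\in VMO(\Omega)$, and the global $L^2$ regularity theorem of Chiarenza--Frasca--Longo~\cite{MR1088476} for non-divergence elliptic equations with $VMO$ coefficients on a $C^2$ domain yields
\[
\|D^2 u\|_{L^2(\Omega)} \;\leq\; C\,\|\tilde f\|_{L^2(\Omega)} \;\leq\; C\|f\|_{L^2(\Omega)} \;+\; C\,\bigl\||\nabla A|\,|\nabla u|\bigr\|_{L^2(\Omega)}.
\]

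The heart of the proof is the control of $\bigl\||\nabla A|\,|\nabla u|\bigr\|_{L^2}$. Apply H\"older's inequality in Orlicz spaces to $|\nabla A|^2\cdot|\nabla u|^2$ with the complementary Young pair $\Phi(t)\simeq t\log^{1+\delta}(e+t)$ and $\Psi(t)\simeq\exp(t^{1/(1+\delta)})-1$:
\[
\int_\Omega |\nabla A|^2|\nabla u|^2\,dx \;\leq\; C\,\bigl\||\nabla A|^2\bigr\|_{L(\log L)^{1+\delta}(\Omega)} \,\bigl\||\nabla u|^2\bigr\|_{\exp L^{1/(1+\delta)}(\Omega)}.
\]
The first factor is comparable to $\|\nabla A\|^2_{L^2(\log L)^{1+\delta}(\Omega)}$. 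For the second, since $\nabla u\in W^{1,2}(\Omega)$ with $\|\nabla u\|_{W^{1,2}(\Omega)}\leq C(\|f\|_{L^2}+\|D^2 u\|_{L^2})$, the Moser--Trudinger embedding $W^{1,2}(\Omega)\hookrightarrow\exp L^2(\Omega)$ (valid on a bounded Lipschitz planar domain) gives $\bigl\||\nabla u|^2\bigr\|_{\exp L(\Omega)}\leq C(\|f\|_{L^2}+\|D^2 u\|_{L^2})^2$; since $\exp L\hookrightarrow\exp L^{1/(1+\delta)}$ on the bounded set $\Omega$, we obtain
\[
\bigl\||\nabla A|\,|\nabla u|\bigr\|_{L^2(\Omega)} \;\leq\; C\,\|\nabla A\|_{L^2(\log L)^{1+\delta}(\Omega)}\bigl(\|f\|_{L^2(\Omega)}+\|D^2 u\|_{L^2(\Omega)}\bigr).
\]

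To close the estimate I would localize: cover $\overline\Omega$ by finitely many balls $B_\rho(x_k)$ whose radius $\rho$ is so small that $C\,\|\nabla A\|_{L^2(\log L)^{1+\delta}(B_{2\rho}(x_k))}<\tfrac12$ for each $k$ (possible by the absolute continuity of the Orlicz norm), run the preceding argument on $\chi_k u$ for a subordinate partition of unity, absorb the $\|D^2 u\|_{L^2(B_\rho(x_k))}$ term into the left-hand side, and sum. I expect the main obstacle to be the bookkeeping of the commutator terms $[L,\chi_k]u$ that arise from the cutoffs: these involve lower-order quantities multiplied by $\nabla A$ and by derivatives of $\chi_k$, and must themselves be controlled by a further round of the Orlicz H\"older/Moser--Trudinger estimate, now at a subcritical level where the energy bound on $\|\nabla u\|_{L^2}$ already suffices.
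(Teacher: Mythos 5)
Your plan is a genuinely different route from the paper's. You rewrite the equation in non\mbox{-}divergence form and invoke the Chiarenza--Frasca--Longo $W^{2,2}$ a priori estimate for $VMO$ coefficients~\cite{MR1088476} as a black box, then control the commutator term $|\nabla A||\nabla u|$ by Orlicz--H\"older duality between $L(\log L)^{1+\delta}$ and $\exp L^{1/(1+\delta)}$ together with the Moser--Trudinger embedding $W^{1,2}(\Omega)\hookrightarrow \exp L^2(\Omega)$. The paper instead adapts Miranda's argument: it tests the equation against a carefully chosen $v$ supplied by Meyers's coercivity (Theorem~\ref{thm:coercive}) and absorbs the dangerous term via a Fefferman--Phong inequality (Lemma~\ref{lemma:CFlemma-n=2}) that it derives from a two\mbox{-}weight bound for $I_1$. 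The underlying analytic content is closely related --- both exploit the $L^2(\log L)^{1+\delta}$/exponential Orlicz duality in two dimensions, and Lemma~\ref{lemma:CFlemma-n=2} could equally be proved by exactly your Orlicz--H\"older~$+$~Trudinger combination --- but the scaffolding around it differs.

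That said, the absorption step as you describe it has real gaps. Localizing and applying the global CFL estimate to $\chi_k u$ is problematic for the balls that meet $\partial\Omega$ (one needs a local version of the CFL estimate up to the boundary, which is not just a matter of bookkeeping), and the constant in CFL depends on the $VMO$ \emph{modulus} of $A$, which is finite under your hypotheses but is not quantitatively controlled by $\|\nabla A\|_{L^2(\log L)^{1+\delta}}$ alone; you never track this, so the final constant you would obtain is not the one claimed in the statement without more work. The paper sidesteps both issues by splitting $U=|\nabla A|$ \emph{globally}: $U=U\chi_{\{U>K\}}+U\chi_{\{U\le K\}}$, where the first piece has arbitrarily small $L^2(\log L)^{1+\delta}$ norm by absolute continuity of the Orlicz norm and the second is bounded by $K$; the small piece is absorbed and the bounded piece is controlled by the energy estimate $\|\nabla u\|_{L^2}\le C\|f\|_{L^2}$. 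You could drop the localization entirely and use this global decomposition directly in your Orlicz--H\"older step; that would remove the boundary and overlap issues in one stroke. Finally, passing to non\mbox{-}divergence form presupposes $u\in W^{2,2}$, so the whole argument must first be run for smooth $A$ and $f$ and then closed by approximation in the Orlicz--Sobolev scale (as the paper does, with some care about uniform ellipticity and uniform choice of the truncation level $K$); you only gesture at this. In short: the strategy is viable and interestingly different, but replace the localization by the global $U_1+U_2$ split, and flesh out the approximation argument, before the proof is complete.
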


Our second result gives information in the end point case when
$\delta=0$.   In this case we need to impose an additional regularity
condition.   Recall (cf.~\cite{MR861756}) that if $\Omega\subset \R^2$, a function $u$ is contained in the Morrey
space $L^{2,\lambda}(\Omega)$ if 
\[ \|u\|_{L^{2,\lambda}(\Omega)} = \sup_{Q}  \left(|Q|^{-\frac{\lambda}{2}} \int_{Q\cap \Omega}
  u^2\,dx\right)^{1/2} < \infty. \]

\begin{theorem} \label{thm:n=2-off-diag}
Let $\Omega \subset \R^2$ be a bounded open set such that $\partial \Omega$ is $C^2$.
Let  $A$ be a $2\times 2$ real-valued matrix that
satisfies~\eqref{E1} and 
\[ \|A\|_{L^2(\log L)(\Omega)}<\infty. \]
Suppose further that for some
$1<r<2$,
$\nabla A \in L^{2,\frac{4}{r}-2}(\Omega)$.
If $f\in L^2(\Omega)$ then there exists a unique solution $u$ of \eqref{eqn:main1}
that satisfies
$$\|D^2u\|_{L^2(\Omega)}\leq C(r,\Omega) \|\nabla
A\|_{L^{2,\frac{4}{r}-2}(\Omega)} \|f\|_{L^2(\Omega)}.$$
\end{theorem}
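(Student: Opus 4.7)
The strategy is to run the same three-step scheme used for Theorem~\ref{thm:main-result} but at the endpoint $p=n=2$: (i) rewrite~\eqref{eqn:main1} in non-divergence form, (ii) apply $L^2$ regularity for the non-divergence equation via Chiarenza-Frasca-Longo, and (iii) bound the lower-order term $(\Div A^T)\cdot \nabla u$ that appears, using extra integrability of $\nabla u$. In dimension two the standard $L^n$-H\"older estimate in (iii) fails, and the Morrey hypothesis $\nabla A\in L^{2,4/r-2}(\Omega)$ is precisely what is required to close the gap. Existence and uniqueness of a weak solution $u\in H^1_0(\Omega)$ with $\|\nabla u\|_{L^2(\Omega)}\leq C\|f\|_{L^2(\Omega)}$ are immediate from Lax-Milgram and Poincar\'e, and the product rule gives
\[ \mathrm{tr}(A\,D^2u)=-f-(\Div A^T)\cdot \nabla u\qquad\text{a.e. in }\Omega. \]

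Setting $\lambda:=\tfrac{4}{r}-2>0$, Poincar\'e on cubes combined with $\int_Q|\nabla A|^2\,dx\leq \|\nabla A\|_{L^{2,\lambda}}^2|Q|^{\lambda/2}$ places $A$ in the Campanato space $\mathcal{L}^{2,2+\lambda}(\Omega)\cong C^{0,\lambda/2}(\overline{\Omega})$, so in particular $A\in VMO(\Omega)$. Because $\partial\Omega\in C^2$, the Chiarenza-Frasca-Longo $L^2$ non-divergence theory~\cite{MR1088476} applies and yields
\[ \|D^2u\|_{L^2(\Omega)}\leq C\bigl(\|\mathrm{tr}(A\,D^2u)\|_{L^2(\Omega)}+\|u\|_{L^2(\Omega)}\bigr); \]
the $\|u\|_{L^2}$ term is controlled by $\|f\|_{L^2}$ via Poincar\'e and the energy estimate, so everything reduces to bounding $\|(\Div A^T)\cdot \nabla u\|_{L^2(\Omega)}$.

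For that, I would invoke Theorem~\ref{thm:main-result} itself: passing to a larger value of $r$ if necessary (this only weakens the Morrey hypothesis), we may assume $r\in(p_0,2)$, so that $f\in L^2(\Omega)\subset L^r(\Omega)$ yields $u\in W^{2,r}(\Omega)$ with $\|u\|_{W^{2,r}(\Omega)}\leq C(r,\Omega)\|f\|_{L^2(\Omega)}$. The Sobolev embedding in $\R^2$ then pushes $\nabla u\in L^{r^*}(\Omega)$, with
\[ r^*=\frac{2r}{2-r}=\frac{4}{\lambda}\quad\text{and}\quad \|\nabla u\|_{L^{r^*}(\Omega)}\leq C(r,\Omega)\|f\|_{L^2(\Omega)}. \]
The matching between the Morrey exponent $\lambda$ and the Lebesgue exponent $r^*$ via the relation $\lambda\,r^*=4$ is precisely what is needed for the Morrey-H\"older inequality
\[ \|Fg\|_{L^2(\Omega)}\leq C\,\|F\|_{L^{2,\lambda}(\Omega)}\,\|g\|_{L^{r^*}(\Omega)} \]
to hold; applying it with $F=|\nabla A|$ and $g=|\nabla u|$ closes the argument and delivers the stated estimate.

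The main obstacle is the Morrey-H\"older inequality in the display above, which is the key technical input. A direct H\"older estimate would require $\nabla A\in L^{4/(2-\lambda)}(\Omega)$, strictly stronger than $L^{2,\lambda}(\Omega)$; the Morrey hypothesis must instead be leveraged via a dyadic covering in which the cube-by-cube decay $\int_Q|\nabla A|^2\,dx\leq \|\nabla A\|_{L^{2,\lambda}}^2|Q|^{\lambda/2}$ is paired with H\"older against $|\nabla u|^{r^*}\in L^1$, the exponent relation $r^*=4/\lambda$ ensuring that the geometric factors cancel and the local contributions sum. The $r$-dependence of the Sobolev constant and of the constant in Theorem~\ref{thm:main-result} combine into the factor $C(r,\Omega)$; its blow-up as $r\uparrow 2$ (equivalently $\lambda\downarrow 0$) is forced by Example~\ref{example:n=2}, so no better exponent is achievable by this scheme.
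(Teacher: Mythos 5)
Your route is genuinely different from the paper's: the paper stays in divergence form and repeats the coercivity-plus-testing argument of Theorem~\ref{thm:main-result} (test against $\eta v_{x_s}$, split $U=|\nabla A|$ at a level $K$, estimate the bad part via a weighted Sobolev inequality), whereas you pass to non-divergence form and invoke Chiarenza--Frasca--Longo. That detour is not fatal in itself, but your key technical lemma is false. The inequality
\[
\|Fg\|_{L^2(\Omega)}\leq C\,\|F\|_{L^{2,\lambda}(\Omega)}\,\|g\|_{L^{r^*}(\Omega)},\qquad \lambda=\tfrac{4}{r}-2,\ r^*=\tfrac{4}{\lambda},
\]
does not hold, and no dyadic covering can save it: take $F(x)=|x|^{-(2-\lambda)/2}$ and $g(x)=|x|^{-\lambda/2}\bigl(\log\tfrac{1}{|x|}\bigr)^{-\gamma}$ near the origin in $\R^2$ with $\lambda/4<\gamma\leq 1/2$ (a nonempty range since $\lambda<2$). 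Then $F\in L^{2,\lambda}$ and $g\in L^{r^*}$ because $\gamma r^*>1$, but $\int (Fg)^2\sim\int |x|^{-2}(\log\tfrac{1}{|x|})^{-2\gamma}\,dx=\infty$ because $2\gamma\leq 1$. The point is that the Morrey decay of $F$ cannot be traded against the mere size of $g$ in $L^{r^*}$; one needs the \emph{smoothness} of $g$. Indeed, for this $g$, $\nabla g\notin L^r$ (the relevant integral diverges precisely when $\gamma\leq 1/r$, and $1/r>1/2\geq\gamma$), so the counterexample is consistent with what is actually true, namely Lemma~\ref{lemma:off-diagonal}: $\|fw\|_{L^2}\leq C(r)\|w\|_{L^{2,4/r-2}}\|\nabla f\|_{L^r}$ for $f\in W_0^{1,2}$. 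That lemma is not a Morrey--H\"older inequality; it is an off-diagonal weighted estimate for the Riesz potential $I_1$ due to D.~R.~Adams (Theorem~\ref{thm:adams}), which uses $|f|\lesssim I_1(|\nabla f|)$ and exploits the gain of $I_1$. Your argument would need to be rebuilt around that lemma, and applying it requires a cutoff (the paper applies it to $\eta\nabla u\in W_0^{1,2}$), which changes the bookkeeping: one ends with $\|\eta D^2u\|_{L^r}$, not $\|\nabla u\|_{L^{r^*}}$, and then uses H\"older against $|\Omega|$ to return to $L^2$, as in the paper's proof.

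Two smaller points. First, you never use the hypothesis $\|A\|_{L^2(\log L)(\Omega)}<\infty$. The paper needs it in the density argument (approximating $A$ by smooth $A_j$): it feeds a generalized H\"older inequality in Orlicz spaces and Trudinger's inequality to control $\|(A-A_j)\nabla u_j\cdot\nabla\phi\|_{L^1}$. Your scheme, if repaired, would also need a density step and hence this hypothesis. Second, you may want to double-check that the Chiarenza--Frasca--Longo $W^{2,2}$ theory as cited applies when $n=2$; the Chiarenza--Franciosi result on which it builds is stated for $n\geq 3$, which is one reason the paper avoids the non-divergence route altogether.
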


Unfortunately, both of these results are weaker than they appear.  In
two dimensions, \eqref{eqn:LlogL-hyp} implies that $\nabla A$ is
continuous: see Cianchi~\cite{MR2265898,MR2723814}.  Similarly, if we
assume that $\nabla A \in L^{2,\frac{4}{r}-2}(\Omega)$, then we also have
that $A$ is H\"older continuous:
see~\cite[p.~298]{gilbarg-trudinger01}.  Thus, both of these results
follow from classical Schauder estimates~\cite{gilbarg-trudinger01}.  Nevertheless, since our proofs are
different from the classical ones they are of some interest.  

It remains open whether anything can be said when $p=n=2$ and $ A
\in W^{1,2}(\Omega)$ or even when $\|\nabla A\|_{L^2(\log
  L)(\Omega)}<\infty$.  We conjecture that in this endpoint case, $D^2u \in L^{2)}(\Omega)$,
where $L^{2)}$ denotes the grand Lebesgue space with norm
\[ \|f\|_{L^{2)}(\Omega)} = \sup_{0<\epsilon<1}
\left(\epsilon \avgint_\Omega
  |f(x)|^{2-\epsilon}\,dx\right)^{\frac{1}{2-\epsilon}}. \]
These spaces were introduced in~\cite{MR1176362} and have proved
useful in the study of endpoint estimates in
PDEs~\cite{MR1230483,MR1428651}.  As evidence for this conjecture, we
note that the solution $u$ given in Example~\ref{example:n=2} is in
$L^{2)}(B)$.   A stronger conjecture, also satisfied by our example, is
that $D^2u$ lies in the Orlicz space $L^2(\log L)^{-1}(\Omega)$. (This
space is a proper subset of $L^{2)}$:  see~\cite{MR1230483}.)  In both
cases our proof techniques are not sharp enough to produce these
estimates and a different approach will be required.

\medskip

The remainder of this paper is organized as follows.  In
Section~\ref{section:prelim} we state some preliminary definitions and
weighted Fefferman-Phong type inequalities that are central to our
proofs.  These results depend on recent work on two-weight norm
inequalities for the Riesz potential~\cite{MR3065302}.  In
Section~\ref{section:main-proof} we prove
Theorem~\ref{thm:main-result}.  Our proof uses ideas
from~\cite{MR1174821}.
In Section~\ref{section:n=2} we
consider the special case when $n=2$: we construct
Example~\ref{example:n=2} and sketch the proofs of
Theorems~\ref{thm:n=2-special} and~\ref{thm:n=2-off-diag}.  Throughout our notation will be
standard or defined as needed.  Given a vector matrix function, if way
say that it belongs to a scalar function space (e.g. $A\in
W^{1,n}(\Omega)$) we mean that each component function is an element
of the function space; to compute the norm we first take the $\ell^2$
norm of the components.  Constants $C$, $C(n)$, etc. may change in
value at each appearance.



\section{Preliminary Results}
\label{section:prelim}

In this section we give conditions on a weight $w$ for the  two-weight Sobolev inequality 
$$\|fw\|_{L^p(\Omega)}\leq C\|\nabla f\|_{L^p(\Omega)} $$
to hold.   Such inequalities are sometimes
referred to as Fefferman-Phong inequalities: see~\cite{MR707957}.  
Given the classical pointwise inequality
\[|f(x)| \leq C(n)I_1(|\grad f|)(x), \qquad f \in C_0^\infty, \] 
it suffices to prove two weight estimates for the Riesz potential of
order one:
$$I_1f(x)=\Delta^{-\frac12}f(x)=c\int_{\R^n}\frac{f(y)}{|x-y|^{n-1}}\,dy.$$

In Theorems~\ref{thm:main-result} and~\ref{thm:n=2-special} we will
apply a sharp sufficient condition for the Riesz potential to be
bounded that was proved by P\'erez~\cite{perez94}; we will use the
version from~\cite[Theorem~3.6]{MR3065302} as this gives precise
values for the constants.  To state
this result, we need to make some definitions; for additional
information on Orlicz spaces and two-weight inequalities, see
~\cite{cruz-martell-perezBook,MR3065302}.  A convex, strictly
increasing function $\Phi:[0,\infty]\rightarrow [0,\infty]$ is said to
be a Young function if $\Phi(0)=0$ and $\Phi(\infty)=\infty$.  Given a
Young function there exists another Young function, $\bar{\Phi}$,
called the associate function, such that $\Phi^{-1}(t)\bar{\Phi}^{-1}(t)\simeq
t$.  For our purposes there are two particularly important examples of
Young functions that we will use. First, if $\Phi(t)=t^r$, $r>1$, then
$\bar{\Phi}(t)=t^{r'}$.  If $\Phi(t)=t^r\log(e+t)^a$, then
$\bar{\Phi}(t)\simeq t^{r'}\log(e+t)^{-\frac{a}{r-1}}$.

Given $1<p<\infty$ and a Young function $\Phi$, define
\begin{equation} \label{eqn:alpha-cond}
 \alpha_{p,\Phi} = \left(\int_1^\infty
  \frac{\Phi(t)}{t^p}\frac{dt}{t}\right)^{1/p}.
\end{equation}

Our conditions on weights  are defined using a normalized
Orlicz norm:  given Young function $\Phi$ and a cube $Q$, let
$$\|f\|_{\Phi,Q}=\inf\bigg\{\lambda>0: \avgint_Q
\Phi\left(\frac{|f(x)|}{\lambda}\right)
\,dx\leq 1\bigg\}.$$
Given a pair of weights $(u, v)$ (i.e., non-negative, locally
integrable functions) and Young functions $\Phi$ and $\Psi$, let
\[ [u,v]_{A^1_{p,\Psi,\Phi}} = 
\sup_Q |Q|^{\frac{1}{n}} \|u^{1/p}\|_{\Psi,Q}\|v^{-1/p}\|_{\Phi,Q}. \]
where the supremum is taken over all cubes in $\R^n$ with sides
parallel to the coordinate axes.  
 
\begin{theorem}{\cite[Theorem~3.6]{MR3065302}} \label{lem:fracintbd} 
Given $1<p<\infty$,  a pair
  of weights $(u,v)$, and Young functions $\Phi$ and $\Psi$,   we have that
\[ \|I_1 \|_{L^p(v)\rightarrow L^p(u)} \leq
C(n,p)[u,v]_{A^1_{p,\Psi,\Phi}} \, \alpha_{p,\bar{\Phi}}\,
\alpha_{p',\bar{\Psi}}. \]
\end{theorem}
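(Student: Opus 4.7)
The plan is a classical two-weight argument in the Orlicz setting: first reduce to a dyadic operator, then apply the generalized H\"older inequality for Orlicz averages to separate weights from functions, and finally conclude with a sparse principal-cube argument together with P\'erez's $L^p$-bounds for Orlicz maximal operators.

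First I would invoke the well-known dyadic reduction: there are finitely many shifted dyadic grids $\mathcal{D}_j$ so that
$$I_1 f(x) \leq C(n)\sum_j I_1^{\mathcal{D}_j} f(x), \qquad I_1^{\mathcal{D}} f(x)=\sum_{Q\in\mathcal{D}}|Q|^{1/n}\Big(\avgint_Q|f|\,dy\Big)\chi_Q(x),$$
so it suffices to bound $\|I_1^{\mathcal{D}} f\|_{L^p(u)}$ for a single grid. By duality, with $g\geq 0$ and $\|g\|_{L^{p'}(u)}=1$,
$$\int I_1^{\mathcal{D}} f\cdot gu\,dx = \sum_{Q\in\mathcal{D}} |Q|^{1+\tfrac{1}{n}}\Big(\avgint_Q f\,dx\Big)\Big(\avgint_Q gu\,dx\Big).$$

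Second, I would apply the generalized H\"older inequality for Orlicz averages twice, writing $f=(fv^{1/p})v^{-1/p}$ and $gu=(gu^{1/p'})u^{1/p}$, to obtain
$$\avgint_Q f\,dx \leq 2\|fv^{1/p}\|_{\bar{\Phi},Q}\|v^{-1/p}\|_{\Phi,Q},\qquad \avgint_Q gu\,dx\leq 2\|gu^{1/p'}\|_{\bar{\Psi},Q}\|u^{1/p}\|_{\Psi,Q}.$$
Since $|Q|^{1/n}\|u^{1/p}\|_{\Psi,Q}\|v^{-1/p}\|_{\Phi,Q}\leq [u,v]_{A^1_{p,\Psi,\Phi}}$ by definition, the estimate reduces to controlling
$$[u,v]_{A^1_{p,\Psi,\Phi}}\sum_{Q\in\mathcal{D}} |Q|\,\|F\|_{\bar{\Phi},Q}\,\|G\|_{\bar{\Psi},Q},\qquad F:=fv^{1/p},\ G:=gu^{1/p'}.$$

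The decisive step is to dominate this bilinear sum by a sparse analogue. A Calder\'on--Zygmund principal-cube stopping time (stopping when either $\|F\|_{\bar{\Phi},Q}$ or $\|G\|_{\bar{\Psi},Q}$ at least doubles) yields a sparse subfamily $\mathcal{S}\subset\mathcal{D}$ with pairwise disjoint $E(S)\subset S$, $|E(S)|\geq \tfrac{1}{2}|S|$, and
$$\sum_{Q\in\mathcal{D}} |Q|\,\|F\|_{\bar{\Phi},Q}\,\|G\|_{\bar{\Psi},Q}\leq C\sum_{S\in\mathcal{S}} |S|\,\|F\|_{\bar{\Phi},S}\,\|G\|_{\bar{\Psi},S}\leq 2C\int_{\R^n} M_{\bar{\Phi}}F\cdot M_{\bar{\Psi}}G\,dx,$$
where $M_\Phi h(x):=\sup_{Q\ni x}\|h\|_{\Phi,Q}$. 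H\"older's inequality together with P\'erez's sharp bounds $\|M_{\bar{\Phi}}\|_{L^p\to L^p}\leq C\alpha_{p,\bar{\Phi}}$ and $\|M_{\bar{\Psi}}\|_{L^{p'}\to L^{p'}}\leq C\alpha_{p',\bar{\Psi}}$, combined with $\|F\|_{L^p}=\|f\|_{L^p(v)}$ and $\|G\|_{L^{p'}}=1$, finish the proof. The main technical obstacle is engineering the two-parameter sparse decomposition so that $\|F\|_{\bar{\Phi},Q}$ and $\|G\|_{\bar{\Psi},Q}$ are tamed simultaneously without losing a factor that would spoil the sharp product $\alpha_{p,\bar{\Phi}}\alpha_{p',\bar{\Psi}}[u,v]_{A^1_{p,\Psi,\Phi}}$.
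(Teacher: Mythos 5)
The paper does not prove this result; Theorem~\ref{lem:fracintbd} is quoted verbatim from \cite[Theorem~3.6]{MR3065302}, so there is no internal proof to compare against. Judged on its own, your outline correctly identifies the ingredients used in the cited reference (and in P\'erez's original argument): a dyadic reduction, the generalized H\"older inequality for Orlicz averages to extract the $A^1_{p,\Psi,\Phi}$ constant, a stopping-time/sparse reduction, and the sharp $L^p$ bounds for the Orlicz maximal operators $M_{\bar{\Phi}}$, $M_{\bar{\Psi}}$ governed by the $\alpha$-integrals.

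However, there is a genuine gap in the sparse step, and it comes from the \emph{order} in which you apply the two reductions. You first use generalized H\"older and the $A^1_{p,\Psi,\Phi}$ condition to peel off $[u,v]_{A^1_{p,\Psi,\Phi}}$, absorbing the factor $|Q|^{1/n}$, and only then attempt a principal-cube reduction on
\[
\sum_{Q\in\mathcal{D}} |Q|\,\|F\|_{\bar{\Phi},Q}\,\|G\|_{\bar{\Psi},Q}.
\]
But this sum is generically infinite: take $F=G=\chi_{[0,1]^n}$ so that $\|F\|_{\bar{\Phi},Q}=\|G\|_{\bar{\Psi},Q}\approx 1$ for every $Q\subset[0,1]^n$, and then the sum over scale-$2^{-k}$ cubes contributes $2^{kn}\cdot 2^{-kn}=1$ for every $k\geq 0$. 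No principal-cube stopping time can dominate an infinite left-hand side by a finite sparse sum. What goes wrong structurally is that between two consecutive stopping cubes $S'\subsetneq S$, the measures of all the intermediate non-stopping cubes $Q$ with $\Pi(Q)=S$ need not sum to $O(|S|)$ once the factor $|Q|^{1/n}$ has been thrown away.

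The fix is to reverse the order. One first performs the Calder\'on--Zygmund/sparse domination on the dyadic fractional operator itself, that is, on $\sum_Q |Q|^{1/n}\big(\avgint_Q f\big)\chi_Q$; here the geometric factor $|Q|^{1/n}$ is indispensable because it makes the sum over cubes nested between consecutive stopping cubes geometrically summable, yielding $I_1^{\mathcal{D}}f \lesssim \sum_{S\in\mathcal{S}}|S|^{1/n}\big(\avgint_S f\big)\chi_S$ for a sparse family $\mathcal{S}$ built from the level sets of $M^{\mathcal{D}}f$. Dualizing against $gu$, one obtains a bilinear sum over the sparse family $\mathcal{S}$ only, and \emph{then} applies generalized H\"older and the $A^1_{p,\Psi,\Phi}$ condition on each $S\in\mathcal{S}$. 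The sparseness (disjoint $E(S)\subset S$, $|E(S)|\gtrsim|S|$) then legitimately gives $\sum_{S}|S|\|F\|_{\bar{\Phi},S}\|G\|_{\bar{\Psi},S}\lesssim \int M_{\bar{\Phi}}F\cdot M_{\bar{\Psi}}G\,dx$, and H\"older plus the $B_p$-type bounds finish the proof with the stated product of constants. With that reordering your argument closes; as written it does not.
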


\begin{remark}
In Theorem~\textup{\ref{lem:fracintbd}} we need to apply the integral condition
in~\eqref{eqn:alpha-cond} to the associate functions
$\bar{\Phi},\,\bar{\Psi}$.  If $\Phi$ and $\Psi$ are doubling (i.e.,
$\Phi(2t)\leq C\Phi(t)$, $t>0$, and similarly for $\Psi$), then by a
change of variables this condition can be restated in terms of $\Phi$
and $\Psi$.  See~\cite[Prop.~5.10]{cruz-martell-perezBook} for further
information.
\end{remark}

\medskip

We can now give the Sobolev inequalities needed for our results.

\begin{lemma} \label{lemma:CFlemma}
Fix $n\geq 2$ and $1<p<n$.  Let $\Omega\subset \mathbb{R}^n$.    Then, for any $f\in
W_0^{1,p}(\Omega)$ and $w\in L^n(\Omega)$, 
\begin{equation} \label{eqn:CFlemma1}
\|fw\|_{L^p(\Omega)}
\leq C(n) (p'-n')^{-1/p'}\|w\|_{L^n(\Omega)} 
\|\grad f\|_{L^p(\Omega)}.
\end{equation}
\end{lemma}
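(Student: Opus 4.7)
The plan is to reduce the inequality to a two-weight bound for the Riesz potential $I_1$ and then apply Theorem~\ref{lem:fracintbd}, exactly as foreshadowed in the preamble of the section.

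First I would invoke the classical pointwise estimate
\[ |f(x)| \leq c_n\, I_1(|\nabla f|)(x), \]
valid for $f \in W_0^{1,p}(\Omega)$ once $f$ is extended by zero outside $\Omega$. Setting $g = |\nabla f|$ (extended by zero), this reduces the target inequality to the two-weight estimate
\[ \|I_1 g\|_{L^p(w^p\,dx)} \leq C(n)\,(p'-n')^{-1/p'}\, \|w\|_{L^n(\Omega)}\, \|g\|_{L^p(\R^n)}. \]

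Second, I would apply Theorem~\ref{lem:fracintbd} with the weight pair $(u,v) = (w^p, 1)$ and the Young function $\Psi(t) = t^n$ (thought of as matching the $L^n$ integrability of $w$). Since $u^{1/p} = w$, the weight part of the $A^1_{p,\Psi,\Phi}$ constant factors cleanly:
\[ |Q|^{1/n}\, \|w\|_{\Psi,Q} \;=\; |Q|^{1/n}\left( \avgint_Q w^n\,dy \right)^{1/n} \leq \|w\|_{L^n(\Omega)}, \]
uniformly in $Q$. The associated Orlicz tail factor is
\[ \alpha_{p',\bar\Psi} \;=\; \left( \int_1^\infty \frac{t^{n'}}{t^{p'}}\,\frac{dt}{t}\right)^{1/p'} = (p'-n')^{-1/p'}, \]
which is finite precisely because $p<n$ (equivalently $p'>n'$), and which accounts for the singular constant in the statement of the lemma.

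For the companion Young function $\Phi$, I would take a power $\Phi(t) = t^s$ with $s$ chosen so that $s' < p$; then $\|v^{-1/p}\|_{\Phi,Q} = \|1\|_{\Phi,Q} = 1$ and the remaining Orlicz factor is
\[ \alpha_{p,\bar\Phi} \;=\; (p-s')^{-1/p}, \]
which is a finite constant depending only on $n$ (and $s$). Multiplying the four factors produced by Theorem~\ref{lem:fracintbd} gives the bound asserted by Lemma~\ref{lemma:CFlemma}.

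The main technical issue is to ensure that the factor $\alpha_{p,\bar\Phi}$ contributes only a bounded constant, so that all the $p$-dependence is concentrated in $\alpha_{p',\bar\Psi} = (p'-n')^{-1/p'}$. This is why the roles of $\Phi$ and $\Psi$ are asymmetric: $\Psi(t)=t^n$ is dictated by the integrability of $w$ and is responsible for the singular factor, while $\Phi$ is a free auxiliary choice used only to make the $B_p$-type condition hold. In the application to Theorem~\ref{thm:main-result}, where $p$ is confined to the fixed subinterval $(p_0,2)$, the choice of $s$ is immediate; alternatively one can argue by the equivalent classical route of applying the Sobolev embedding $\|f\|_{L^{p^*}} \leq K(n,p)\|\nabla f\|_{L^p}$ together with Hölder's inequality at the conjugate pair $(p^*/p,\,n/p)$.
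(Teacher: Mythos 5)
Your proposal follows the paper's argument essentially verbatim: extend by zero, use the pointwise bound $|f|\leq c_n I_1(|\nabla f|)$, and apply Theorem~\ref{lem:fracintbd} with the weight pair $(w^p,1)$, $\Psi(t)=t^n$, and an auxiliary power Young function, with the singular factor coming entirely from $\alpha_{p',\bar\Psi}=(p'-n')^{-1/p'}$. One small point in your favor: your condition $s'<p$ on the auxiliary power $\Phi(t)=t^s$ is the correct one for $\alpha_{p,\bar\Phi}=(p-s')^{-1/p}$ to be finite, whereas the paper's text takes $\Phi(t)=t^r$ with $1<r<p$ and writes $\alpha_{p,\bar\Phi}=(p-r)^{-1/p}$, which appears to be a slip (it should involve $r'$, not $r$).
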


\begin{proof} 
Extend $w$ to a function on all of $\R^n$ by setting it equal to $0$
outside of $\Omega$.  Let $\Psi(t)=t^{n}$ and $\Phi(t)=t^r$, $1<r<p$; the exact value of
$r$ is not significant.   Then 
\[ \alpha_{p',\bar{\Psi}} = (p'-n')^{-1/p'}, \qquad
\alpha_{p,\bar{\Phi}} = (p-r)^{-1/p}, \]
and so we have that
\begin{multline*}
 [w^p,1]_{A^1_{p,\Psi,\Phi}} \, \alpha_{p,\bar{\Phi}}\,
\alpha_{p',\bar{\Psi}} 
\\ =
(p'-n')^{-1}(p-r)^{-1}\sup_Q |Q|^{1/n} \left(\avgint_Q
w^n\,dx\right)^{1/n} 
\leq (p'-n')^{-1}(p-r)^{-1}\|w\|_{L^n(\Omega)}.
\end{multline*}
Therefore, 
by Lemma \ref{lem:fracintbd} we have that for all $f\in C_0^\infty(\Omega)$,
\[\|fw\|_{L^p(\R^n)}\leq \|I_1(|\nabla f|)w\|_{L^p(\R^n)} \leq 
C(n,p,r)(p'-n')^{-1/p'}\|w\|_{L^n(\Omega)}\|\nabla f\|_{L^p(\R^n)}. \]
The desired inequality follows for
all $f$ by a standard approximation argument.
\end{proof}

When $n\geq 3$ we see that $w\in L^n(\Omega)$ implies the Sobolev
inequality for $p=2$.  When $n=2$ we only get the Sobolev inequality
for $1<p<2$, and the constant blows up as $p$ tends to $2$ (and also
as it tends to $1$).  In general $w\in L^2(\Omega)$ will not be a
sufficient condition for the Sobolev inequality when $p=n=2$.

To prove Theorem~\ref{thm:n=2-special} we can use the full power of
Theorem~\ref{lem:fracintbd} to prove a substitute for
Lemma~\ref{lemma:CFlemma}.  To state it, we define the non-normalized
Orlicz norm: given an open set $\Omega$ and an Orlicz function $\Psi$,
\[\|f\|_{ L^\Psi(\Omega)} = \inf\left\{ \lambda > 0 :
\int_\Omega \Psi\left(\frac{|f(x)|}{\lambda}\right)dx\leq 1
\right\}. \]
When $\Psi(t)=t^2\log(e+t)^{1+\delta}$, then we write $L^\Phi(\Omega)=
L^2(\log L)^{1+\delta}(\Omega)$. 

\begin{lemma} \label{lemma:CFlemma-n=2}
Given a bounded open set $\Omega\subset \mathbb{R}^2$ and $w\in L^2(\log
L)^{1+\delta}(\Omega)$,  if $f\in
W_0^{1,2}(\Omega)$,
then
\begin{equation} \label{eqn:CFlemma1-n=2}
\|fw\|_{L^2(\Omega)}
\leq 
C\delta^{-1/2} [1+\diam(\Omega)]\|w\|_{L^2(\log L)^{1+\delta}(\Omega)} 
\|\grad f\|_{L^2(\Omega)}.
\end{equation}
\end{lemma}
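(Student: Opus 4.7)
The plan is to mirror the proof of Lemma~\ref{lemma:CFlemma}, using the pointwise bound $|f(x)|\leq C(n)I_1(|\nabla f|)(x)$ to reduce the weighted Sobolev inequality to a two-weight bound for the Riesz potential, and then applying Theorem~\ref{lem:fracintbd} to the pair $(u,v)=(w^2,1)$ with a pair of Young functions finely tuned to the endpoint $p=n=2$. The key replacement for the power Young function $\Psi(t)=t^n$ (which worked in Lemma~\ref{lemma:CFlemma} when $n\geq 3$, but whose factor $\alpha_{p',\bar\Psi}$ blows up when $p=n=2$) will be $\Psi(t)=t^2\log(e+t)^{1+\delta}$, while $\Phi(t)$ can be any power $t^r$ with $r>2$ (e.g.\ $r=3$), so that $\alpha_{2,\bar\Phi}$ is a finite absolute constant and $\|1\|_{\Phi,Q}=\Phi^{-1}(1)^{-1}$ is a harmless constant.

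With these choices, $\bar\Psi(t)\simeq t^2/\log(e+t)^{1+\delta}$, so
\[
\alpha_{2,\bar\Psi}\leq C\left(\int_1^\infty \frac{dt}{t\log(e+t)^{1+\delta}}\right)^{1/2}\leq C\delta^{-1/2},
\]
which produces the $\delta^{-1/2}$ factor in the final constant. The bump constant reduces to $\sup_Q |Q|^{1/2}\|w\|_{\Psi,Q}$ up to a universal factor. For cubes $Q$ with side length $\ell(Q)\leq 1+\diam(\Omega)$, I would use the scaling inequality $\Psi(\alpha t)\leq \alpha^2\Psi(t)$ for $\alpha\in[0,1]$ (immediate from $\log(e+\alpha t)\leq\log(e+t)$), and take $\lambda=(1+\diam(\Omega))\|w\|_{L^\Psi(\Omega)}/\ell(Q)$ with $\alpha=\ell(Q)/(1+\diam(\Omega))\leq 1$ to verify
\[
\avgint_Q \Psi(w/\lambda)\,dx \leq \frac{\alpha^2}{|Q|}\int_\Omega \Psi(w/\|w\|_{L^\Psi(\Omega)})\,dx \leq \frac{\alpha^2}{|Q|} \leq 1,
\]
yielding $|Q|^{1/2}\|w\|_{\Psi,Q}\leq (1+\diam(\Omega))\|w\|_{L^\Psi(\Omega)}$. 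Plugging into Theorem~\ref{lem:fracintbd} would then give the desired Riesz potential bound for $f\in C_0^\infty(\Omega)$, and a density argument extends~\eqref{eqn:CFlemma1-n=2} to all $f\in W_0^{1,2}(\Omega)$.

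The main obstacle is justifying that the supremum in the bump condition can in fact be restricted to cubes of side length $\lesssim 1+\diam(\Omega)$. For cubes $Q\supset\Omega$ of much larger side, a direct computation shows $|Q|^{1/2}\|w\|_{\Psi,Q}$ behaves roughly like $\|w\|_{L^2(\Omega)}\log(|Q|)^{(1+\delta)/2}$, which is unbounded as $|Q|\to\infty$. The way out is to exploit that the input $|\nabla f|$ is supported in $\Omega$: for $x\in\Omega$, $I_1(|\nabla f|)(x)$ coincides with the truncated operator $\int_{|x-y|<R}|\nabla f(y)||x-y|^{-1}\,dy$ for any $R\geq\diam(\Omega)$, so one may replace $I_1$ by its $R$-truncation with $R\simeq 1+\diam(\Omega)$. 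In the dyadic decomposition underlying Theorem~\ref{lem:fracintbd} only cubes of side $\lesssim R$ then contribute, and the sup over these cubes is controlled by the Case~1 bound above. This localization step is precisely where the $(1+\diam(\Omega))$ factor in the final constant genuinely enters.
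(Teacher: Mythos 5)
Your approach is essentially the same as the paper's: reduce to the two-weight Riesz potential estimate of Theorem~\ref{lem:fracintbd} with $u=w^2$, $v=1$, and Young functions $\Psi(t)=t^2\log(e+t)^{1+\delta}$, $\Phi(t)=t^r$, compute $\alpha_{2,\bar\Psi}\simeq\delta^{-1/2}$, and bound $|Q|^{1/2}\|w\|_{\Psi,Q}$ by $(1+\diam\Omega)\|w\|_{L^\Psi(\Omega)}$ for cubes of side at most $\diam\Omega$. Your submultiplicativity argument ($\Psi(\alpha t)\leq\alpha^2\Psi(t)$, choose $\lambda=(1+\diam\Omega)\|w\|_{L^\Psi}/\ell(Q)$) is a cleaner way to arrive at the same bound the paper gets by unwinding the definition of the Luxemburg norm directly; the two computations are equivalent.

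Where you genuinely add something is the final paragraph. The paper disposes of the restriction to cubes with $|Q|\leq\diam(\Omega)^2$ with the one-line remark ``since we may assume $\supp(w)\subset\Omega$,'' which is not by itself a justification: as you observe, compact support of $w$ does not make $\sup_Q|Q|^{1/2}\|w\|_{\Psi,Q}$ finite over \emph{all} cubes, and indeed for cubes $Q\supset\Omega$ one checks that $|Q|^{1/2}\|w\|_{\Psi,Q}\sim\|w\|_{L^2(\Omega)}\log(|Q|)^{(1+\delta)/2}\to\infty$. Your fix---observing that for $x\in\Omega$ and $|\nabla f|$ supported in $\Omega$ one has $I_1(|\nabla f|)(x)=\int_{|x-y|<R}|\nabla f(y)||x-y|^{-1}\,dy$ with $R=\diam\Omega$, so only cubes of side $\lesssim R$ enter the dyadic model operator---is the correct and standard way to localize, and it is precisely what the paper's terse remark must be read as shorthand for. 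You are right that this step is where the $(1+\diam\Omega)$ factor legitimately enters. The only thing you leave implicit is that the proof of \cite[Theorem~3.6]{MR3065302} really does reduce to a dyadic sum in which the truncation kills the large scales; that is a true but unverified appeal, on the same level of rigor as the paper's own reliance on the cited theorem, so it does not constitute a gap relative to the paper. In short: same route, same constants, with a welcome explicit treatment of the localization that the paper glosses over.
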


\begin{proof}
We begin as in the proof of Lemma~\ref{lemma:CFlemma}, but we now take
$\Psi(t)=t^2\log(e+t)^{1+\delta}$.    Then
\[ \alpha_{2,\bar{\Psi}} = \left(\int_1^\infty
  \frac{dt}{t\log(e+t)^{1+\delta}}\right)^{1/2}= C\delta^{-1/2}< \infty, \]
and
\[ [ w^2,1]_{A^1_{2,\Phi,\Psi}} = 
\sup_Q |Q|^{1/2}\|w\|_{\Psi,Q}. \]
Since we may assume $\supp(w)\subset \Omega$, we may restrict the
supremum to cubes $Q$ such that $|Q|\leq \diam(\Omega)^2$.  Then by
the definition of the norm, we have that
\begin{align*}
 |Q|^{1/2}\|w\|_{\Psi,Q}
&  = \inf\left\{ \lambda > 0 :
\avgint_Q \frac{|Q|w(x)^2}{\lambda^2} \log\left(e+\frac{|Q|^{1/2}
    w(x)}{\lambda}\right)^{1+\delta}dx \leq 1 \right\} \\
& \leq \inf\left\{ \lambda > 0 :
\int_\Omega \frac{w(x)^2}{\lambda^2} \log\left(e+\frac{\diam(\Omega)
    w(x)}{\lambda}\right)^{1+\delta}dx \leq 1 \right\} \\
& \leq [1+\diam(\Omega)] \|w\|_{L^\Psi(\Omega)}. 
\end{align*}
The desired inequality now follows as before. 
\end{proof}

To prove Theorem~\ref{thm:n=2-off-diag} we need an ``off-diagonal''
inequality for the Riesz potential.  There is a version of
Theorem~\ref{lem:fracintbd} in this case, but we will use a stronger
result due to D.~R. Adams~\cite{MR0336316} (see
also~\cite[Theorem~4.7.2]{MR1014685}).

\begin{theorem} \label{thm:adams}
Given $1<p<n$, $p<q<\infty$ and a weight $u$, if
\[ u(Q) \leq K|Q|^{\frac{a}{n}}, \]
where $a= \frac{q(n-p)}{p}$, then 
\[ \|I_1 f \|_{L^q(u)} \leq C(p,q,n) K^{1/q}\|f\|_{L^p}. \]
\end{theorem}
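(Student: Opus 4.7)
The result is Adams's trace inequality~\cite{MR0336316} (see also~\cite[Theorem~4.7.2]{MR1014685}), and my plan is to reproduce Adams's two-step argument: a Hedberg-type pointwise bound for $I_1$ combined with a weighted maximal function estimate coming from the growth condition on $u$.

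First I would establish the Hedberg pointwise inequality
\[ I_1 f(x) \leq C\,Mf(x)^{1-p/n}\,\|f\|_{L^p}^{p/n}. \]
This is obtained by splitting the Riesz potential at a radius $\delta>0$: the near-field piece $\int_{|x-y|<\delta}|x-y|^{-(n-1)}|f(y)|\,dy$ is dominated by $C\delta\,Mf(x)$ by summing over dyadic annuli, and the far-field piece by $C\delta^{1-n/p}\|f\|_{L^p}$ via H\"older's inequality (convergence of $\int_{|x-y|\geq \delta}|x-y|^{-(n-1)p'}dy$ requires $(n-1)p'>n$, which is exactly the hypothesis $p<n$). Optimizing in $\delta$ produces the displayed inequality.

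Next I would raise to the $q$-th power and integrate against $u$:
\[ \|I_1 f\|_{L^q(u)}^q \leq C\,\|f\|_{L^p}^{pq/n}\int Mf(x)^{s}\,u(x)\,dx, \qquad s := \frac{q(n-p)}{n} = \frac{ap}{n}, \]
so it suffices to prove the weighted maximal inequality $\|Mf\|_{L^s(u)} \leq C\,K^{1/s}\|f\|_{L^p}$. I would establish this via Marcinkiewicz interpolation between the trivial $L^\infty$ bound $\|Mf\|_{L^\infty}\leq\|f\|_{L^\infty}$ and the weak-type $(1,a/n)$ inequality
\[ u\bigl(\{Mf>\lambda\}\bigr) \leq C\,K\,\bigl(\|f\|_{L^1}/\lambda\bigr)^{a/n}, \]
which in turn follows by a Calder\'on--Zygmund decomposition $\{Mf>\lambda\}\subset\bigcup_j Q_j$ into disjoint cubes satisfying $|Q_j|\leq\lambda^{-1}\int_{Q_j}|f|$, the growth assumption $u(Q_j)\leq K|Q_j|^{a/n}$, and the elementary inequality $\sum_j t_j^{a/n}\leq(\sum_j t_j)^{a/n}$ for $t_j\geq 0$ (which holds when $a/n\geq 1$ by the reduction $\sum_j (t_j/T)^{a/n}\leq \sum_j t_j/T = 1$ with $T=\sum_j t_j$). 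Since $pq/n + s = q$, combining the two steps and taking $q$-th roots yields the stated bound with constant $CK^{1/q}$.

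The main obstacle is the subcritical range $a/n<1$, corresponding to $p<q<np/(n-p)$. There the map $t\mapsto t^{a/n}$ is super-additive, reversing the summation step above and breaking the Calder\'on--Zygmund argument. The standard remedy, which Adams follows, is either to let the truncation parameter $\delta$ in the pointwise split depend on $x$ so that the growth condition on $u$ is absorbed directly in the pointwise estimate, or to reduce to the critical Hardy--Littlewood--Sobolev inequality $\|I_1 f\|_{L^{np/(n-p)}}\leq C\|f\|_{L^p}$ via a weighted H\"older argument. Either approach completes the proof in the remaining range.
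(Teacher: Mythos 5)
The paper does not prove Theorem~\ref{thm:adams}; it is quoted from Adams~\cite{MR0336316} and Ziemer~\cite[Theorem~4.7.2]{MR1014685}, so there is no in-text proof to compare against. Judged on its own, your argument via Hedberg's pointwise bound plus Marcinkiewicz interpolation for $M$ is correct, but only in the range $a/n\geq 1$, i.e. $q\geq np/(n-p)$. You identify this restriction yourself, but the two ``standard remedies'' you offer for $p<q<np/(n-p)$ do not actually work, and that is exactly the range the paper uses: in Lemma~\ref{lemma:off-diagonal} one has $n=q=2$, $p=r\in(1,2)$, so $a/n=2/r-1<1$ and $q<p^{*}=2r/(2-r)$. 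Thus your proof, as written, covers none of the cases needed in the paper.

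The gap is not merely technical. The weak-type $(1,a/n)$ endpoint $u(\{Mf>\lambda\})\leq CK(\|f\|_{L^1}/\lambda)^{a/n}$ is genuinely \emph{false} when $a/n<1$: take $f$ to be $N$ unit bumps at widely separated points $x_1,\dots,x_N$ and a weight $u$ that saturates $u(B_r(x_k))\approx Kr^{a}$ near each $x_k$; then the left side grows like $NK\lambda^{-a/n}$ while the right side grows like $N^{a/n}\lambda^{-a/n}$, and $N^{1-a/n}\to\infty$. So there is nothing to interpolate from, and replacing $M$ by a fractional maximal function $M_{\beta}$ only extends the argument to $a>n-1$ (one needs $\beta\geq n-a$ together with $\beta<1$ for the near-field sum to converge). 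The second remedy---H\"older to pass to $\|I_1 f\|_{L^{p^{*}}}$---requires $u\in L^{(p^{*}/q)'}$, which is not implied by the growth hypothesis $u(Q)\leq K|Q|^{a/n}$. The correct proof in the full range $p<q<\infty$ (Adams, Maz'ya, or Ziemer's Theorem~4.7.2) uses a genuinely different mechanism, typically a capacitary strong-type inequality or a dual/Wolff-potential estimate, not a weighted maximal-function bound.
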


\begin{lemma} \label{lemma:off-diagonal}
Given an open set $\Omega\subset \mathbb{R}^2$,  suppose that for $1<r<2$,
$w\in  L^{2,\frac{4}{r}-2}(\Omega)$
   If $f\in
W_0^{1,2}(\Omega)$, then 
\begin{equation} \label{eqn:adams}
 \|fw\|_{L^2(\Omega)}
\leq C(r)\|w\|_{L^{2,\frac{4}{r}-2}(\Omega)} 
\|\grad f\|_{L^r(\Omega)}.
\end{equation}
\end{lemma}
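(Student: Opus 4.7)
The plan is to reduce inequality \eqref{eqn:adams} to the off-diagonal Riesz potential estimate provided by Theorem~\ref{thm:adams}, applied with the weight $u = w^2$ extended by zero to $\R^2 \setminus \Omega$. Using the standard pointwise representation $|f(x)| \leq C\, I_1(|\grad f|)(x)$ valid for $f \in C_0^\infty(\R^2)$, one immediately has
\[
\|fw\|_{L^2(\Omega)} \leq C \,\|I_1(|\grad f|)\|_{L^2(w^2\,dx)},
\]
so it suffices to establish the boundedness of $I_1 : L^r(\R^2) \to L^2(w^2\,dx)$ with the claimed constant.

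The core step is to match the Morrey exponent of $w^2$ with the hypothesis of Theorem~\ref{thm:adams}. Setting $n = 2$, $p = r$, and $q = 2$, Adams' condition becomes $w^2(Q) \leq K\,|Q|^{a/n}$ with $a = q(n-p)/p = 2(2-r)/r$, hence $a/n = 2/r - 1$. On the other hand, unpacking the definition of the Morrey norm given in the introduction yields, for every cube $Q \subset \R^2$,
\[
\int_{Q \cap \Omega} w^2\,dx \leq \|w\|_{L^{2,\,4/r - 2}(\Omega)}^2\,|Q|^{(4/r - 2)/2} = \|w\|_{L^{2,\,4/r - 2}(\Omega)}^2\,|Q|^{2/r - 1},
\]
which is exactly the required bound with $K = \|w\|_{L^{2,\,4/r - 2}(\Omega)}^2$.

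Applying Theorem~\ref{thm:adams} to $g = |\grad f| \in L^r(\R^2)$ (which is legitimate since $f \in W_0^{1,2}(\Omega)$ and $\Omega$ is bounded, so $\grad f \in L^r$ for $r<2$ by H\"older's inequality) now yields
\[
\|I_1(|\grad f|)\|_{L^2(w^2\,dx)} \leq C(r)\,K^{1/2}\,\|\grad f\|_{L^r(\Omega)} = C(r)\,\|w\|_{L^{2,\,4/r - 2}(\Omega)}\,\|\grad f\|_{L^r(\Omega)},
\]
giving \eqref{eqn:adams} on the dense subspace $C_0^\infty(\Omega)$; a standard approximation argument extends the inequality to all of $W_0^{1,2}(\Omega)$. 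There is no real obstacle beyond the bookkeeping reconciling the normalization $\lambda/2$ of the Morrey norm with Adams' parameter $a/n$; once that identification is made, the conclusion follows directly from Theorem~\ref{thm:adams}.
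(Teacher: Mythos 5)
Your proposal is correct and follows essentially the same route as the paper's own proof: extend $w$ by zero, identify the Adams exponent $a/n = 2/r - 1$ with the Morrey exponent $\lambda/2 = 2/r - 1$, and invoke Theorem~\ref{thm:adams} together with the pointwise bound $|f| \lesssim I_1(|\grad f|)$ and a density argument. The only difference is cosmetic: you spell out the exponent bookkeeping a bit more explicitly than the paper does.
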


\begin{proof}
Extend $w$ to a function on all of $\R^2$ by setting it equal to $0$
outside of $\Omega$.   Define $a$ as in Theorem~\ref{thm:adams} with
$p=r$, $q=n=2$.  Then for all cubes $Q$ that intersect $\Omega$,
\[ |Q|^{-\frac{a}{2}}\int_Q w(x)^2\,dx =
|Q|^{1-\frac{2}{r}}\int_{Q\cap \Omega} w(x)^2\,dx \leq
\|w\|_{L^{2,\frac{4}{r}-2}(\Omega)}^2. \]
Inequality~\eqref{eqn:adams} now follows from 
Theorem~\ref{thm:adams} and an approximation argument.
\end{proof}

\section{Proof of Theorem~\ref{thm:main-result}}
\label{section:main-proof}

We begin with two results due to Meyers \cite{MR0159110}.  The first is a coercivity condition.

\begin{theorem} \label{thm:coercive}
Given a bounded open set $\Omega\subset \R^n$ with $C^1$ boundary, let
$A$ be an $n\times n$ real-valued matrix that satisfies~\eqref{E1}.  
Define the sesquilinear form
\[ \mathfrak{a}(u,v) = \int_\Omega A\grad u \cdot \grad v\,dx. \]
Then there exists $p_0=p_0(n,\lambda,\Lambda,\Omega)$,  $1<p_0<2$, such that for all $p$, $p_0<p\leq2$, 
and all $u \in W^{1,p}_0(\Omega)$, 
\begin{equation} \label{eqn:coercive}
 \|u \|_{W^{1,p}_0(\Omega)} \approx
\sup_{\|v\|_{W^{1,p'}_0(\Omega)}=1} |\mathfrak{a}(u,v)|.
\end{equation}
Moreover, the constants in this equivalence depend on $\lambda$,
$\Lambda$, $p$, $n$, and $\Omega$.  They are independent of the specific matrix $A$.  
\end{theorem}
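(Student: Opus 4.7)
The plan is to establish the easy direction via H\"older's inequality and ellipticity, and to obtain the non-trivial direction by duality combined with Meyers' isomorphism theorem applied to the transposed operator $-\Div(A^T\nabla\cdot)$.

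For the upper bound the pointwise estimate in~\eqref{E1} gives $|\mathfrak{a}(u,v)|\le \Lambda\|\nabla u\|_{L^p(\Omega)}\|\nabla v\|_{L^{p'}(\Omega)}$, so taking the supremum over $\|v\|_{W^{1,p'}_0(\Omega)}=1$ immediately yields an upper bound of $\Lambda\|u\|_{W^{1,p}_0(\Omega)}$.

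For the converse inequality, given $u\in W^{1,p}_0(\Omega)$ I would first use duality of $L^p$ to choose a vector field $G\in L^{p'}(\Omega;\R^n)$ with $\|G\|_{L^{p'}(\Omega)}\le 1$ and
\[\int_\Omega \nabla u\cdot G\,dx \ge \tfrac12 \|\nabla u\|_{L^p(\Omega)}.\]
Meyers' theorem from~\cite{MR0159110} provides an exponent $p_0=p_0(n,\lambda,\Lambda,\Omega)\in(1,2)$ such that for every $q\in(p_0,p_0')$ the map $-\Div(A^T\nabla\cdot):W^{1,q}_0(\Omega)\to W^{-1,q}(\Omega)$ is a linear isomorphism whose inverse is bounded uniformly in any matrix satisfying~\eqref{E1} with the same constants $\lambda$ and $\Lambda$. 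Since $A^T$ obeys the same ellipticity bounds and $p'\in[2,p_0')$ whenever $p\in(p_0,2]$, one may uniquely solve
\[\int_\Omega A^T\nabla v\cdot\nabla\phi\,dx = \int_\Omega G\cdot\nabla\phi\,dx \quad \text{for every } \phi\in W^{1,p}_0(\Omega),\]
with $v\in W^{1,p'}_0(\Omega)$ and $\|v\|_{W^{1,p'}_0(\Omega)}\le C\|G\|_{L^{p'}(\Omega)}\le C$. Taking $\phi=u$ and recognising the left side as $\mathfrak{a}(u,v)$ yields
\[\mathfrak{a}(u,v)=\int_\Omega G\cdot\nabla u\,dx \ge \tfrac12 \|\nabla u\|_{L^p(\Omega)},\]
and then $v/\|v\|_{W^{1,p'}_0(\Omega)}$ is a competitor in the supremum realising a value of at least $(2C)^{-1}\|u\|_{W^{1,p}_0(\Omega)}$.

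The real work is Meyers' isomorphism theorem itself, whose proof rests on a self-improving reverse H\"older estimate of Gehring type for $|\nabla u|$ combined with a Calder\'on--Zygmund/duality argument. It is the Gehring step that produces the threshold exponent $p_0$ and forces the restriction $p>p_0$; this is the main obstacle, and it is also the reason no wider range of $p$ can be extracted from this line of argument. The constants in the equivalence depend only on $n$, $\lambda$, $\Lambda$, $p$, and $\Omega$, but not on the specific matrix $A$, because every constant appearing in the Caccioppoli/reverse H\"older chain depends only on the ellipticity bounds and the geometry of $\Omega$.
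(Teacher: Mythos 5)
Your duality argument is sound and gives a valid route: solve the adjoint problem $-\Div(A^T\nabla v)=-\Div G$ in $W^{1,p'}_0(\Omega)$ using Meyers' isomorphism for the transposed operator (admissible since $A^T$ has the same ellipticity constants and $p'\in[2,p_0')$), then test with $u$ to recover $\|\nabla u\|_{L^p}$. This is a legitimate reconstruction, though not the route the paper takes: the paper does not go through the isomorphism at all but instead cites the inf--sup estimate
\[
\inf_{\|u\|_{W^{1,p}_0}=1}\;\sup_{\|v\|_{W^{1,p'}_0}=1}\;|\mathfrak{a}(u,v)|\ge\kappa>0
\]
\emph{directly} from the proof of Theorem~1 in Meyers~\cite{MR0159110}, so the converse inequality is essentially a quotation rather than a derivation.

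What your proposal misses is the one point where the paper does genuine work. Meyers' argument is conditional on a hypothesis about the Laplacian: one needs some $q>p_0'$ for which the Dirichlet problem $\Delta\Phi=\Div F$ on $\Omega$ has a unique weak solution with $\|\nabla\Phi\|_{L^q(\Omega)}\le C\|F\|_{L^q(\Omega)}$. Meyers' original result is stated for domains smoother than $C^1$ (or for the abstractly defined ``regular'' domains), so to invoke it under the $C^1$ boundary assumption stated in the theorem one must verify this Laplacian regularity. The paper does this by appealing to Auscher and Qafsaoui~\cite{MR1911202}, who establish the required $W^{1,q}$ gradient estimate for all $1<q<\infty$ on bounded $C^1$ domains. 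Without this verification your citation of ``Meyers' theorem'' does not in fact apply to the domain class in the statement, so you should supply that step. Your closing paragraph sketching Gehring/reverse H\"older ideas is a fair description of the mechanism producing $p_0$, but it is not a substitute for checking that Meyers' hypotheses hold under a $C^1$ boundary.
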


\begin{proof}
The upper
estimate for $\mathfrak{a}(u,v)$ is just H\"older's inequality; it is
the lower estimate that is non-trivial.  From the proof of Theorem 1
in \cite{MR0159110} we have the existence of $p_0<2$ and
$\kappa=\kappa(\lambda,\Lambda,p,\Omega)>0$ such that 
$$\inf_{\|u\|_{W^{1,p}_0}=1}\sup_{\|v\|_{W^{1,p'}_0}=1}|\mathfrak{a}(u,v)|\geq
\kappa.$$
A key hypothesis in the proof is the existence of $q>p_0'$ such that
for every $F\in L^q(\Omega)$, there exists a unique weak solution $\Phi$ to
the equation $\Delta \Phi=\Div F$ on $\Omega$ and the 
estimate
\begin{equation}\label{eqn:lapgradest} \|\nabla \Phi\|_{q}\leq C\|F\|_{q},\end{equation}
holds.  Since $\partial \Omega$ is $C^1$, by Auscher and
Qafsaoui~\cite{MR1911202}, we have that such a solution exists and \eqref{eqn:lapgradest} holds for all $q$, $1<q<\infty$.  
\end{proof}

\begin{remark}
  The value of $p_0$ is difficult to estimate from Meyer's proof.
  There is an elegant proof of this result in~\cite{MR1645727} that
  uses the Hodge decomposition.  In~\cite{MR1288682} a careful
  estimate is given for the resulting constants; though again the
  exact value is not easy to determine.  
  In passing we note that in~\cite{MR1645727},
  Theorem~\ref{thm:coercive} is proved for ``regular'' domains, which
  are defined abstractly in~\cite{MR1288682}.  However, regular
  domains include Lipschitz domains: see~\cite{MR1428651}.
\end{remark}

For our existence results we also need the following result of Meyers \cite[Theorem~1]{MR0159110}.

\begin{theorem} \label{thm:Meyers} Let $\Omega \subset \R^n$,
  $n\geq 2$, be a bounded open set such that $\partial \Omega$ is
  $C^1$, and 
let  $A$ be an $n\times n$ real-valued matrix that
satisfies~\eqref{E1}.  Then the equation
$$Lu=\Div( A\nabla u) =f$$
has a unique solution in $W_0^{1,p}(\Omega)$ for every $f\in
L^p(\Omega)$, provided $p_0<p<p_0'$, where $p_0$ is the constant from Theorem~\textup{\ref{thm:coercive}}.  The solution satisfies the estimate
$$\|\nabla u\|_{L^p(\Omega)}\leq C\|f\|_{L^p(\Omega)},$$
where $C=C(n,\lambda,\Lambda,p,\Omega)$.
\end{theorem}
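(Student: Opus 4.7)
The plan is to reformulate the theorem as the invertibility of a single bounded linear map and then deduce that invertibility from the coercivity of Theorem~\ref{thm:coercive}. Set $\mathfrak{a}(u,v)=\int_\Omega A\nabla u\cdot\nabla v\,dx$ and define
\[ T_A:W_0^{1,p}(\Omega)\longrightarrow W^{-1,p}(\Omega)=(W_0^{1,p'}(\Omega))^{*},\qquad T_A u(v)=\mathfrak{a}(u,v). \]
Since $\Omega$ is bounded, the Poincar\'e inequality gives $W_0^{1,p'}(\Omega)\hookrightarrow L^{p'}(\Omega)$, so any $f\in L^p(\Omega)$ defines a functional $F_f(v)=\int_\Omega fv\,dx$ with $\|F_f\|_{W^{-1,p}}\le C(\Omega)\|f\|_{L^p}$. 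A weak solution of $Lu=f$ in $W_0^{1,p}(\Omega)$ is exactly a $u$ satisfying $T_A u = -F_f$, so it suffices to show that $T_A$ is a Banach-space isomorphism for every $p\in(p_0,p_0')$, with operator norm on the inverse depending only on $n,\lambda,\Lambda,p,\Omega$.

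For $p=2$ this is the classical Lax--Milgram theorem applied to the coercive form $\mathfrak{a}$ on $W_0^{1,2}(\Omega)$. For $p\in(p_0,2)$, the bounded-below estimate supplied by Theorem~\ref{thm:coercive},
\[ \|u\|_{W_0^{1,p}}\le C\,\|T_A u\|_{W^{-1,p}}, \]
already yields injectivity and closed range, so only surjectivity remains. For this I would approximate $f\in L^p(\Omega)$ by $f_k\in C_c^\infty(\Omega)$ in the $L^p$ norm, solve $T_A u_k=-F_{f_k}$ via Lax--Milgram (legal because $f_k\in L^2(\Omega)$), and observe that the solutions $u_k\in W_0^{1,2}(\Omega)\subset W_0^{1,p}(\Omega)$ form a Cauchy sequence in $W_0^{1,p}(\Omega)$ by applying the bounded-below estimate to $u_k-u_j$. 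The limit $u\in W_0^{1,p}(\Omega)$ satisfies $T_A u=-F_f$ together with the required estimate, and uniqueness is again immediate from bounded below.

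For $p\in(2,p_0')$, Theorem~\ref{thm:coercive} is not directly available, and I would proceed by duality. Because $A^T$ satisfies the same ellipticity~\eqref{E1} as $A$, the previous paragraph applied to $A^T$ at the conjugate exponent $p'\in(p_0,2)$ produces an isomorphism $T_{A^T}:W_0^{1,p'}(\Omega)\to W^{-1,p'}(\Omega)$. A direct unfolding using $\langle A^T\xi,\eta\rangle=\langle\xi,A\eta\rangle$ together with the reflexivity of the spaces involved identifies the Banach-space adjoint $(T_{A^T})^{*}$ with $T_A:W_0^{1,p}(\Omega)\to W^{-1,p}(\Omega)$. Since the adjoint of an isomorphism between reflexive Banach spaces is an isomorphism of the same operator norm, $T_A$ is invertible with $\|T_A^{-1}\|\le\|T_{A^T}^{-1}\|$, and the gradient estimate for $f\in L^p(\Omega)$ follows at once.

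The main obstacle is the duality identification $(T_{A^T})^{*}=T_A$ in the last step; it requires some careful bookkeeping of the canonical pairings between $W_0^{1,p}(\Omega)$, its double dual, and $W^{-1,p}(\Omega)$ in order to realise the abstract adjoint as the concrete operator built from $\mathfrak{a}$. Beyond that the argument is essentially a functional-analytic rearrangement of Theorem~\ref{thm:coercive}, and the uniform dependence of the constants on $A$ (i.e.\ only on $n,\lambda,\Lambda,p,\Omega$) is inherited directly from the last sentence of that theorem.
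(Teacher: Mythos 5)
Your argument is correct, but it takes a genuinely different route from the paper: the paper does not prove Theorem~\ref{thm:Meyers} at all, citing it directly as Meyers' Theorem~1 in~\cite{MR0159110} (whose original proof is a Neumann-series perturbation of $\Delta^{-1}$ combined with Calder\'on--Zygmund estimates for the Laplacian). You instead derive the existence theorem as a formal consequence of the inf-sup estimate in Theorem~\ref{thm:coercive}, which is itself cited from the \emph{proof} of Meyers' Theorem~1. Concretely, you package the problem as invertibility of $T_A\colon W_0^{1,p}(\Omega)\to W^{-1,p}(\Omega)$, get injectivity and closed range from the lower bound for $p_0<p\le 2$, obtain surjectivity by approximating $f$ in $L^p$ by $C^\infty_c$ data solvable via Lax--Milgram (legitimate, since for $p<2$ the Lax--Milgram solutions $u_k\in W_0^{1,2}\subset W_0^{1,p}$ on the bounded domain, and the inclusion $W_0^{1,p'}\subset W_0^{1,2}$ lets you test against $W_0^{1,p'}$), and handle $2<p<p_0'$ by the adjoint identification $(T_{A^T})^{*}=T_A$, which you compute correctly using reflexivity and $\langle A^T\xi,\eta\rangle=\langle\xi,A\eta\rangle$. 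The dependence of the constants only on $n,\lambda,\Lambda,p,\Omega$ is indeed inherited from the uniformity asserted in Theorem~\ref{thm:coercive}. What your route buys is an explicit logical reduction, existence from coercivity, rather than a parallel appeal to the same source; what it costs is the functional-analytic bookkeeping (density of $C_c^\infty$ in $W^{-1,p}$ to get full surjectivity if one wants the isomorphism statement, and the duality pairing gymnastics you flag), none of which hides a gap. The only thing I would add for completeness is a one-line justification that $A^T$ still satisfies~\eqref{E1} (since $\langle A^T\xi,\xi\rangle=\langle A\xi,\xi\rangle$), which you implicitly use to apply Theorem~\ref{thm:coercive} to $A^T$ at exponent $p'$.
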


\medskip

\begin{proof}[Proof of Theorem \textup{\ref{thm:main-result}}]
Fix a matrix $A$ satisfying~\eqref{E1}, and fix $p$, $p_0<p<2$, where
$p_0$ is as in Theorem~\ref{thm:coercive}.  By Theorem \ref{thm:Meyers}, for
any $f\in L^p(\Omega)$ the equation $Lu=f$ has a unique solution $u
\in W^{1,p}_0(\Omega)$ such that
\begin{equation} \label{eqn:grad-ineq}
 \|\grad u \|_{L^p(\Omega)} \leq C \|f \|_{L^p(\Omega)},
\end{equation}
with $C$ independent of $f$.

\medskip

We first prove the desired estimate on $D^2 u$ in the special case when $f\in
C^\infty(\Omega)$ and $A\in C^\infty(\Omega)$;
afterwards we will prove the general case by a double approximation
argument.  Let $u$ be the solution of~\eqref{eqn:main1}.   Then $u\in
C^\infty(\Omega)$: see Evans~\cite[Th.~3, Sec.~6.3.1]{MR2597943}.  (Note
that in this result there is an implicit assumption on the regularity of the boundary because of an
appeal to a Poincar\'e-Sobolev type inequality for functions without compact support in $\Omega$; $C^1$ is more than sufficient for this purpose.)
We now have the pointwise identity
\[ f = - \Div A\grad u = -\sum_{i,j} \big( a_{ij} u_{x_j}\big)_{x_i}.\]

Fix $s$ with $1\leq s \leq n$ and $\eta\in C^\infty_0(\Omega)$ with
$0\leq \eta\leq 1$.  Then $\eta u_{x_s}\in W_0^{1,p}(\Omega)$, so by
Theorem~\ref{thm:coercive} there exists $v\in C_0^2(\Omega)$,
$\|v\|_{W^{1,p'}_0}=1$, and
$\kappa=\kappa(n,\lambda,\Lambda,\Omega)>0$ such that
\begin{multline} \label{eqn:use-lemma}
 |\mathfrak{a}(\eta u_{x_s},v)| 
\geq \kappa \|\eta u_{x_s}\|_{W^{1,p}_0}
\geq \kappa \| \nabla\big(\eta u_{x_s}\big)\|_{L^p(\Omega)} \\
\geq \kappa \| \eta\nabla u_{x_s}\|_{L^p(\Omega)} 
- \kappa \|u_{x_s} \grad \eta\|_{L^p(\Omega)}.
\end{multline}
If we multiply $f$ by $\eta v_{x_s}$, integrate over $\Omega$ and integrate by parts twice we get
\begin{align*}
\int_\Omega f\eta v_{x_s}\,dx 
&= - \int_\Omega \sum_{i,j} \big(a_{ij} u_{x_j}\big)_{x_i} \eta
v_{x_s}\,dx \\
& =  \int_\Omega \sum_{i,j} a_{ij} u_{x_j} \big( \eta v_{x_s}\big)_{x_i}\,dx\nonumber\\
&= -\int_\Omega \sum_{i,j} \big(\eta a_{ij} u_{x_j}\big)_{x_s} v_{x_i}\,dx + \int_\Omega v_{x_s} A\nabla u\cdot \nabla \eta \, dx\nonumber\\
 &= - \int_\Omega \sum_{i,j} \eta\big(a_{ij} \big)_{x_s}u_{x_j}
v_{x_i}\,dx - \int_\Omega \eta_{x_s}  A\nabla u\cdot \nabla v\,dx\\
& \quad - \int_\Omega  \eta  A \nabla \big( u_{x_s}\big)\cdot \nabla v\,dx + \int_\Omega v_{x_s} A\nabla u\cdot \nabla \eta\,dx.\nonumber
\end{align*}
Therefore, if we take absolute values, rearrange terms, and combine
this with inequality~\eqref{eqn:use-lemma}, we get
\begin{align*}
\kappa \|\eta \grad ( u_{x_s})\|_{L^p(\Omega)} 
&\leq |\mathfrak{a}(\eta u_{x_s},v)| + \kappa \|u_{x_s}\nabla \eta\|_{L^p(\Omega)}\\
 &\leq  \left|\int_\Omega \eta A\grad(u_{x_s})\cdot \grad v\,dx\right|
 + \left|\int_\Omega u_{x_s} A\grad \eta\cdot \grad v\,dx\right|+ \kappa \|u_{x_s}\nabla \eta\|_{L^p(\Omega)}\\
 &\leq \int_\Omega \bigg|\sum_{i,j} \eta\big(a_{ij}\big)_{x_s}
  u_{x_j}v_{x_i}\bigg|\,dx +\int_\Omega |\eta_{x_s}|\,|A\nabla u\cdot\nabla v|\; dx\\
& \quad +\, \int_\Omega |v_{x_s}|\,|A\nabla u \cdot\nabla \eta|\; dx + \int_\Omega |u_{x_s}|\,|A\nabla\eta\cdot\nabla v|\; dx\nonumber\\
& \quad  + \int_\Omega |f\eta v_{x_s}|\,dx + \kappa \|u_{x_s}\nabla
\eta\|_{L^p(\Omega)} \\
& = I_1 + I_2 + I_3 + I_4 + I_5 + I_6. 
\end{align*}

We estimate each separately.  The bound for $I_5$ is straightforward:  by H\"older's inequality,
\[ I_5 \leq \|f\|_{L^p(\Omega)}\|v_{x_s}\|_{L^{p'}(\Omega)}
\leq  \|f\|_{L^p(\Omega)}\|v\|_{W_0^{1,p'}(\Omega)}
= \|f\|_{L^p(\Omega)}. \]
Similarly,  using H\"older's inequality together with \eqref{E1} we
see that
\[ 
I_2+I_3+I_4+I_6 \leq C(\kappa,\Lambda)(\sup_\Omega |\nabla \eta|)\|\nabla u\|_{L^p(\Omega)}.
\]

The key estimate is for $I_1$.   Define
\[ A_s = \big( \big(a_{ij}\big)_{x_s}\big), \qquad 
U = |\grad A| = \bigg(\sum_{i,j,s} \big(a_{ij})_{x_s}^2\bigg)^{1/2}, \]
and fix $\epsilon>0$; the exact value of $\epsilon$ will be given below.
Since $U\in L^n(\Omega)$, there exists $K=K(\epsilon,U)$ such that 
\begin{equation} \label{eqn:define-K}
 \left(\int_{\{x : U(x)>K\}} U(x)^n\,dx\right)^{1/n} < \epsilon.
\end{equation}
Let $U_1= U\chi_{\{x : U(x)>K\}}$ and $U_2=U-U_1$.  
Then, by H\"older's inequality and Lemma~\ref{lemma:CFlemma}, here
using that $\eta\nabla u\in W^{1,p}_0(\Omega)$, we can
estimate as follows:
\begin{align*}
I_1
 & = \int_\Omega |\eta A_s \grad u \cdot \grad v|\,dx \\
&  \leq \int_\Omega U|\eta\grad u||\grad v|\,dx \\
&  \leq \left(\int_\Omega |\eta\grad u\,U|^p\,dx\right)^{1/p}
\left(\int_\Omega |\grad v|^{p'}\,dx\right)^{1/p'} \\
&  \leq \left(\int_\Omega |\eta\grad u\, U_1|^p \,dx\right)^{1/p}
+ \left(\int_\Omega |\eta\grad u\, U_2|^p \,dx\right)^{1/p}\\
&  \leq C(n)(p'-n')^{-1/p'}\epsilon \left(\int_\Omega |\nabla(\eta \grad u)|^p\,dx\right)^{1/p}
+ K(\epsilon,U)\left(\int_\Omega |\grad u|^p \,dx\right)^{1/p}\\
&  \leq C(n,p)\epsilon \left(\int_\Omega |\eta D^2 u|^p\,dx\right)^{1/p}
+C(n,p)\epsilon \left(\int_\Omega |\grad \eta \cdot \grad u|^p\,dx\right)^{1/p}
+ {K}\left(\int_\Omega |\grad u|^p \,dx\right)^{1/p} \\
&  \leq C(n,p)\epsilon \left(\int_\Omega |\eta D^2 u|^p\,dx\right)^{1/p}
+ \tilde{K}(1+\|\grad \eta \|_\infty)\left(\int_\Omega |\grad u|^p \,dx\right)^{1/p},
\end{align*}
where $\tilde{K} =\tilde{K}(n,p,\ep,K)$.

Each of the above estimates hold for all values of $s$.  Therefore, by
Minkowski's inequality, if we sum over all $s$ and combine these
estimates, we get that 
\begin{multline*}
\kappa\|\eta D^2u\|_{L^p(\Omega)} \leq \sum_s \kappa \|\eta\grad(u_{x_s})\|_{L^p(\Omega)} \\
\leq C(n,p)\epsilon\|\eta D^2 u\|_{L^p(\Omega)}+ \overline{K}(1+\|\grad \eta \|_\infty)\|\grad u\|_{L^p(\Omega)} + n\|f\|_{L^p(\Omega)},
\end{multline*}
where $\overline{K}=\overline{K}((n,p,\Lambda, \epsilon,K)$.  Since $\epsilon>0$ is arbitrary, we can fix $\epsilon=\kappa/2C(n,p)$ and
then rearrange terms to get 
\begin{multline} \label{eqn:main-est}
\|\eta D^2 u\|_{L^p(\Omega)} \\
\leq {2\overline{K}\kappa^{-1}}(1+\|\grad \eta \|_\infty)\|\grad u\|_{L^p(\Omega)}
+{2n}{\kappa^{-1}}\|f\|_{L^p(\Omega)} \leq C_0 (1+\|\grad \eta \|_\infty) \|f\|_{L^p(\Omega)},
\end{multline}
where the last inequality follows from~\eqref{eqn:grad-ineq}, and $C_0=C_0(p,n,\lambda,\Lambda,\Omega,K)$.

To complete the proof, fix $\Omega'\Subset\Omega$ and choose $\eta \in
C^\infty_0(\Omega)$ such that $\eta(x) = 1$ in $\Omega'$ and so that
$\|\grad \eta\| \approx
D^{-1}$, where $D=\text{dist}(\Omega',\partial \Omega)$.  Without loss of
generality we may assume $D>1$.  With this choice of $\eta$, 
inequality~\eqref{eqn:main-est} yields the local
$W^{2,p}(\Omega)$ estimate
\begin{equation}\label{localest}
\| D^2 u \|_{L^p(\Omega')} \leq D^{-1}C\|f\|_{L^p(\Omega)},
\end{equation}
where $C=C(n, p, \lambda,\Lambda,K)$.  Finally, if we assume that
$\partial\Omega$ is $C^2$, we can apply the argument given in
\cite[p.187]{gilbarg-trudinger01} to obtain a constant $C>0$ depending
on $K,\; p,\; n,\; \lambda$, and $\Lambda$ so that
\begin{equation}\label{globest}
\| D^2 u\|_{L^p(\Omega)} \leq C \|f\|_{L^p(\Omega)}.
\end{equation}
This
completes the proof of inequality~\eqref{eqn:main2} when $f$ and $A$
are sufficiently smooth.

\bigskip

We will now prove that inequalities~\eqref{localest}
and~\eqref{globest} hold for general $f$ and $A$ satisfying the
hypotheses.  We will only consider the latter equation as the proof of
the former is essentially the same.  

We will first show that we can take an arbitrary $f$.  Fix $f\in
L^p(\Omega)$, and fix a sequence of functions $\{f_j\}$ in
$C^\infty(\Omega)$ that converge to $f$ in $L^p(\Omega)$.    Fix $A \in
C^\infty(\Omega)$ and let $u_j\in W^{1,p}_0(\Omega)$ be the solution to $L u_j =
f_j$, and let $u\in W_0^{1,p}$ be the solution to $L u = f$.  
By inequality \eqref{eqn:grad-ineq} and the Sobolev inequality, we
have that 
\[ \|u-u_j\|_{L^p(\Omega)} \leq C\|\grad (u-u_j)\|_{L^p(\Omega)} 
\leq C\|f-f_j\|_{L^p(\Omega)}.  \]
Therefore, $u_j\rightarrow u$ in $W_0^{1,p}(\Omega)$.  

Since $f_j$ and $A$ have the requisite smoothness, we can
apply~\eqref{globest} to $u_i-u_j$ to get
\[ \|D^2(u_i-u_j)\|_{L^p(\Omega)} \leq C\|f_i-f_j\|_{L^p(\Omega)}. \]
Thus, the sequence $\{u_j\}$ is Cauchy in $W^{2,p}(\Omega)$.  For
$1\leq r,\,s\leq n$, let $v_{r,s}$ denote the limit of
$\{(u_j)_{x_r,x_s}\}$.  Then for any $\phi\in C_0^\infty(\Omega)$,
\begin{equation} \label{eqn:weak}
 \int_\Omega u_{x_s} \phi_{x_r}\,dx = 
\lim_{j\rightarrow \infty} 
\int_\Omega (u_j)_{x_s} \phi_{x_r}\,dx =
\lim_{j\rightarrow \infty} 
\int_\Omega (u_j)_{x_r,x_s} \phi\,dx =
\int_\Omega v_{r,s} \phi\,dx.
\end{equation}
Therefore,  $u\in W^{2,p}(\Omega)$ and 
$u_j\rightarrow u$ in $W^{2,p}(\Omega)$.
Inequality~\eqref{eqn:main2} for $u$ now follows
immediately.

\bigskip

Finally, we prove that we can take arbitrary $A\in W^{1,n}(\Omega)$.
Fix such an $A$, and let $\{A_j\}$ be a sequence of matrices in
$C^\infty(\Omega)$ that converges to $A$ in
$W^{1,n}(\Omega)$.   It follows at once from the standard construction
of the $A_j$ (cf.~Adams and Fournier~\cite{MR2424078}) that we may
assume that the $A_j$ are elliptic with the same ellipticity constants
as $A$.  Finally,
let $U_j=|\grad A_j|$;  then $U_j \rightarrow U=|\grad A|$ in
$L^n(\Omega)$.  By the converse to the dominated convergence theorem
(see Lieb and Loss~\cite[Th.~2.7]{MR1817225}), if we pass to a subsequence,
then we may assume that $U_j \rightarrow U$ pointwise a.e., and there
exists $g\in L^n(\Omega)$ such that $U_j(x)\leq g(x)$ a.e.  Therefore,
by the dominated convergence theorem (again passing to a subsequence)
we may assume that~\eqref{eqn:define-K} holds (with fixed
$\epsilon$) for each $U_j$ with a constant $K$ independent of $j$.

Fix $f\in L^p(\Omega)$ and let $u_j\in W_0^{1,p}(\Omega)$ be the
solution of $-\Div A_j \grad u_j = f$ and let $u\in W_0^{1,p}(\Omega)$ be the solution of
$Lu= -\Div A \grad u = f$.   Then for any $\phi\in C_0^\infty(\Omega)$, 
\[ \int_\Omega A_j\grad u_j\cdot \grad \phi\,dx 
= - \int_\Omega f\phi\,dx 
= \int_\Omega A\grad u\cdot \grad \phi\,dx.  \]
Therefore,
\[ \int_\Omega \big(A\grad u - A\grad u_j+A\grad u_j - A_j\grad
u_j\big)\grad \phi\,dx = 0, \]
and so by rearranging terms we have that
\[ |\frak{a}( u - u_j, \phi)| = 
\left|\int_\Omega A(\grad u -\grad u_j)\cdot\grad \phi\,dx\right|\
\leq \int_\Omega |(A-A_j)\grad u_j\cdot \grad \phi|\,dx. \]
By Theorem~\ref{thm:coercive} there exists $\phi$ such that
$\|\phi\|_{W_0^{1,p'}(\Omega)}=1$ and $\kappa>0$ such that
\begin{multline} \label{eqn:converge}
  \kappa \|u-u_j\|_{W_0^{1,p}} \leq 
\int_\Omega |(A-A_j)\grad u_j\cdot \grad \phi|\,dx \\
\leq \|A-A_j\|_{L^n(\Omega)}
\|\grad u_j\|_{L^{\frac{np}{n-p}}(\Omega)} 
\|\grad \phi \|_{L^{p'}(\Omega)}.
\end{multline}
The last estimate follows by H\"older's inequality, since
\[ \frac{1}{n}+ \frac{n-p}{np} +\frac{1}{p'} = 1. \]
The last term on the righthand side of~\eqref{eqn:converge} is at most
$1$.  By our choice of the $A_j$, the first term tends to $0$ as
$j\rightarrow \infty$. And by the Sobolev inequality,
\[ \|\grad u_j\|_{L^{\frac{np}{n-p}}(\Omega)} \leq 
C\|D^2 u_j\|_{L^p(\Omega)} \leq C\|f\|_{L^p(\Omega)}; \]
the final inequality holds since by our choice of the $A_j$,
inequality~\eqref{eqn:main2} holds for each  $u_j$ with a constant
independent of $j$.  
 Therefore, the
middle term on the righthand side of~\eqref{eqn:converge} is uniformly
bounded.  Hence, $u_j\rightarrow u$ in
$W_0^{1,p}(\Omega)$.

It remains to show $D^2u$ exists and estimate its norm.   By
inequality~\eqref{eqn:main2}, the sequence $\{D^2u_j\}$ is uniformly
bounded in $L^p(\Omega)$, and so has a weakly convergent subsequence.
Passing to this subsequence, we can repeat the argument
at~\eqref{eqn:weak} to conclude that $u\in W^{2,p}(\Omega)$ and
$D^2u_j$ converges weakly to $D^2u$.  But then we have that
\[  \|D^2u\|_{L^p(\Omega)} \leq \liminf_{j\rightarrow \infty}
 \|D^2u_j\|_{L^p(\Omega)} \leq C\|f\|_{L^p(\Omega)}, \]
and this completes the proof.
\end{proof}

\section{The Case $n=2$}
\label{section:n=2}

In this section we consider the two dimensional case.  We first
construct Example~\ref{example:n=2} and then prove
Theorems~\ref{thm:n=2-special} and~\ref{thm:n=2-off-diag}.  

\medskip

\begin{proof}[Construction of Example~\textup{\ref{example:n=2}}]
  Our example is adapted from one given by Clop {\em et
    al.}~\cite[p.~205]{MR2474121} and is based on the theory of
  quasiregular mappings.  Let $B=B_{1/2}(0)$ and let $z=x+iy$.  Define
$f(z)=z(1-2\log|z|)$.
Then
$$\D f(z)=-2\log|z| \quad \text{and} \quad \Dbar
f(z)=\frac{z}{\bar{z}},$$
and so $f$ satisfies the Beltrami equation 
$\Dbar f=\mu\,\D f$
with Beltrami coefficient 
$$\mu(z)=\frac{z}{\bar{z}\log(|z|^{-2})}=\frac{z^2}{|z|^2\log(|z|^{-2})}.$$

If we let let $u= \re f$, that is,
$$u(x,y)=x(1-\log(x^2+y^2)),$$
then $u$ satisfies the equation
$$-\Div(A\nabla u)=0$$
where $A$ is the symmetric, real-valued matrix
\[ A =\left[\begin{array}{cc}\displaystyle \frac{|1-\mu|^2}{1-|\mu|^2} &
    \displaystyle\frac{-2\im \mu}{1-|\mu|^2} \vspace{2mm} \\
    \displaystyle \frac{-2\im \mu}{1-|\mu|^2}&\displaystyle
    \frac{|1+\mu|^2}{1-|\mu|^2} \end{array}\right]
=\frac{1+\sigma^2}{1-\sigma^2}{\bf
  Id}-\frac{2}{1-\sigma^2}\left[\begin{array}{cc}\al & \beta \\ \beta
    & -\al\end{array}\right], 
\] 
and
\[ \sigma=|\mu|=\frac{-1}{\log(x^2+y^2)}, \quad
\al=\re \mu=\frac{x^2-y^2}{x^2+y^2}\sigma, \quad 
\beta=\im \mu=\frac{2xy}{x^2+y^2}\sigma. 
\]
This follows from a straightforward calculation:  for the details,
see~\cite[p.~412]{MR2472875}.  

We claim that $A$ is elliptic and in $W^{1,2}(B)$, and that $u\in
W^{2,p}(B)$ for $p<2$ but not when $p=2$.  By our choice of domain, $0\leq
\sigma\leq k=(\log 4)^{-1}$.  Let
$\xi=(\xi_1,\xi_2)\in \R^2$; then
\begin{equation} \label{eqn:inner}
\langle A\xi,\xi\rangle=
\frac{1+\sigma^2}{1-\sigma^2}|\xi|^2-\frac{2\al(\xi_1^2-\xi_2^2)
+4\beta\xi_1\xi_2}{1-\sigma^2}.
\end{equation}
Since
$$\al(\xi_1^2-\xi_2^2)+4\beta\xi_1\xi_2=2(\al,\beta)\cdot
(\xi_1^2-\xi_2^2,2\xi_1\xi_2),$$
by the Cauchy-Schwarz inequality we have that 
$$|2\al(\xi_1^2-\xi_2^2)+4\beta\xi_1\xi_2|
\leq 2\sqrt{\al^2+\beta^2}\sqrt{(\xi_1^2-\xi_2^2)^2+4\xi_1^2\xi_2^2}=2\sigma |\xi|^2.$$
Hence,
$$-2\sigma|\xi|^2\leq 2\al(\xi_1^2-\xi_2^2)+4\beta\xi_1\xi_2\leq
2\sigma|\xi|^2,$$
and if we combine this with inequality~\eqref{eqn:inner}, we get
$$\frac{1-k}{1+k}|\xi|^2
\leq \frac{1-\sigma}{1+\sigma}|\xi|^2
\leq \langle A\xi,\xi\rangle\leq \frac{1+\sigma}{1-\sigma}|\xi|^2
\leq \frac{1+k}{1-k}|\xi|^2.  $$
Thus, $A$ is elliptic with $\lambda=\frac{1-k}{1+k}$ and $\Lambda=\frac{1+k}{1-k}$.

To see that $A=(a_{ij})\in W^{1,2}(B)$, a lengthy (and {\em
  Mathematica} assisted) calculation shows that
\begin{multline*}
\frac{\partial a_{11}}{ \partial x}
  =\\
-\frac{4x[x^2-y^2-2y^2\log^3(x^2+y^2)+(x^2-y^2)\log^2(x^2+y^2)
+2(x^2+2y^2)\log(x^2+y^2)]}{(x^2+y^2)^2(\log^2(x^2+y^2)-1)^2}
\end{multline*}
and the derivatives $\frac{\partial}{\partial x} a_{ij}$ and $\frac{\partial}{\partial y} a_{ij}$ are similar.
It follows that
\[ \left|\frac{\partial}{\partial x} a_{ij}\right|, \left|\frac{\partial}{\partial y} a_{ij}\right| \leq
C\frac{|\log^3(x^2+y^2)|}{(x^2+y^2)^{\frac12}(\log^2(x^2+y^2)-1)^2} \in
L^2(B). \]

Finally to see that $u\in W^{2,p}(B)$ for $p<2$ but not in
$W^{2,2}(B)$, another calculation shows that 
$$u_{xx}(x,y)=\frac{-2x(x^2+3y^2)}{(x^2+y^2)^2}, 
\ \ u_{xy}(x,y)=\frac{-2y(y^2-x^2)}{(x^2+y^2)^2}, 
\ \ u_{yy}(x,y)=\frac{-2x(x^2-y^2)}{(x^2+y^2)^2}.$$  
Thus, each second derivative is bounded by a constant multiple of
$(x^2+y^2)^{-\frac12}\in L^p(B)$, so $u\in W^{2,p}$.  On the other
hand,
$$\int_B |u_{xx}|^2\,dxdy=\infty,$$
so $u\not\in W^{2,2}(B)$.
\end{proof}

\begin{proof}[Proof of Theorem~\textup{\ref{thm:n=2-special}}]
Most of the proof is identical to the proof of
Theorem~\ref{thm:main-result}, setting $n=p=2$.  However, in two
places we need to make specific changes to the proof.   The proof for
$f$ and $A$ smooth is
the same up to inequality~\eqref{eqn:define-K}.    We again split $U$,
but now we fix $\epsilon$ (to be determined below) and find $K$ such
that 
\begin{equation} \label{eqn:define-K-n=2}
\|U\chi_{\{U>K\}}\|_{L^\Psi(\Omega)}<\epsilon,
\end{equation}
where $\Psi(t)=t^2\log(e+t)^{1+\delta}$. 
(This is again possible by the dominated convergence theorem in the
context of Orlicz spaces.)
Let $U=U_1+U_2=U\chi_{\{U>K\}}+U\chi_{\{U\leq K\}}$;  then by
Lemma~\ref{lemma:CFlemma-n=2},
\begin{multline} \label{eqn:use-FPh}
\left( \int_\Omega (|\eta\nabla u|U)^2\,dx\right)^{1/2}
\leq \left(\int_\Omega (|\eta\nabla u|U_1)^2\,dx\right)^{1/2}
+K\left(\int_\Omega |\nabla u|^2\,dx\right)^{1/2}\\
\leq \epsilon \, C(\delta,\Omega)  \left(\int_\Omega |D^2
  u|^2\,dx\right)^{1/2}
+\overline{K}(1+\|\grad \eta\|_\infty)\|f\|_{L^2(\Omega)}.  
\end{multline}
The argument now proceeds as before, yielding
$$\|\eta D^2u\|_{L^2(\Omega)}\leq C_0(1+\|\grad \eta\|_\infty)\|f\|_{L^2(\Omega)},$$
where again the constant $C_0=C_0(n,p,\lambda,\Lambda,\Omega,K)$.

\medskip

The proof for arbitrary $f\in L^2(\Omega)$ goes through exactly as
before.  For the proof for arbitrary $\grad A \in L^\Psi(\Omega)$,
note first that by the Sobolev embedding theorem we have $A\in
L^\Psi(\Omega)$.  We now fix smooth  
 $A_j\rightarrow A$ in $W^{1,\Psi}(\Omega)$ (the Sobolev space defined
 with respect to the $L^\Psi$ norm), and we may again assume that
 the $A_j$ have the same ellipticity constants and that we may choose
 $K$ such that \eqref{eqn:define-K-n=2} holds for all $U_j=|\grad
 A_j|$ with a constant $K$ independent of $j$.  This is possible
 since all the arguments for $W^{1,p}(\Omega)$ extend to
 $W^{1,\Psi}(\Omega)$  with almost no change.  Smooth functions are
 dense (see ~\cite{MR2424078}) and the proof of density again shows that
 ellipticity constants are preserved.  The converse of dominated
 convergence also holds in this setting; the proof is implicit in the
 literature.  For a proof in a different context that readily adapts
 to Orlicz spaces, see~\cite[Prop.~2.67]{cruz-fiorenza-book}.

 The proof now continues as before until
 inequality~\eqref{eqn:converge}.  Here we need to apply the
 generalized H\"older's inequality in the scale of Orlicz spaces
 (see~\cite[Lemma~5.2]{cruz-martell-perezBook}).  If we let $\Phi(t)=
 \exp(t^{\frac{2}{1+\delta}})-1$, then
\[ \Psi^{-1}(t)\Phi^{-1}(t) \approx 
\frac{t^{1/2}}{\log(e+t)^{\frac{1+\delta}{2}}}
\log(e+t)^{\frac{1+\delta}{2}}
\lesssim t^{1/2}. \]
Therefore, we can estimate as follows: 
\begin{multline*}
\left|\int_\Omega A(\nabla u-\nabla u_j)\cdot \nabla \phi\,dx\right|
\leq \int_\Omega |(A-A_j)\nabla u_j\cdot\nabla \phi|\,dx \\
\leq  \|(A-A_j)\nabla u_j\|_{L^2(\Omega)}\|\nabla\phi\|_{L^2(\Omega)}
\leq  C\|A-A_j\|_{L^\Psi(\Omega)} \|\nabla u_j\|_{L^\Phi(\Omega)}
  \|\nabla\phi\|_{L^2(\Omega)}. 
\end{multline*}
As in the previous argument, we have chosen $\phi$ so that
$\|\nabla\phi\|_{L^2(\Omega)}\leq 1$.  We also have that $
\|A-A_j\|_{L^\Psi(\Omega)}\rightarrow 0$ as $j\rightarrow \infty$.
Therefore, we could complete the proof as before if we can show that 
\[ \|\nabla u_j\|_{L^\Phi(\Omega)} \leq C\|f\|_{L^2(\Omega)} \]
with a constant independent of $j$.  

Let $\Phi_0(t) = \exp(t^2)-1$.  Then for $t\geq 1$, $\Phi(t)\leq
\Phi_0(t)$, and so by the properties of Orlicz norms
(see~\cite[Sec.5.2]{cruz-martell-perezBook}) there exists a constant
depending on $\delta$ and $\Omega$
such that $\|\nabla u_j\|_{L^\Phi(\Omega)}\leq C \|\nabla
u_j\|_{L^{\Phi_0}(\Omega)}$.   But by Trudinger's
inequality~\cite[Thm.~2.9.1]{MR1014685} we have the endpoint Sobolev
inequality:
\[ \|\nabla u_j\|_{L^{\Phi_0}(\Omega)} \leq C\|D^2u_j\|_{L^2(\Omega)}. \]
By the first part of the proof we have that
$\|D^2u_j\|_{L^2(\Omega)}\leq C\|f\|_{L^2(\Omega)}$ with a constant
independent of $j$; combining these inequalities we get the desired
estimate and this completes the proof.
\end{proof}

\medskip

\begin{proof}[Proof of Theorem~\textup{\ref{thm:n=2-off-diag}}]
The proof is nearly identical to the proof of
Theorem~\ref{thm:n=2-special}.  Let $\Psi(t)=t^2\log(e+t)$.  The first
half of the proof for smooth $f$ and $A$ is the same until
\eqref{eqn:use-FPh}.  Here, we use the off-diagonal estimate in Lemma~\ref{lemma:off-diagonal} and
H\"older's inequality to get
\begin{align*}
& \left( \int_\Omega (|\eta\nabla u|U)^2\,dx\right)^{1/2} \\
& \qquad \qquad \leq \left(\int_\Omega (|\eta \nabla u|U_1)^2\,dx\right)^{1/2}
+K\left(\int_\Omega |\nabla u|^2\,dx\right)^{1/2}\\
& \qquad \qquad \leq \epsilon \, C(\delta,\Omega)  \left(\int_\Omega |\eta D^2
  u|^r\,dx\right)^{1/r}
+\overline{K}(1+\|\grad \eta\|_\infty)\|f\|_{L^2(\Omega)} \\
& \qquad \qquad \leq \epsilon \, C(\delta,\Omega)
|\Omega|^{\frac{1}{(2/r)'}}\left(\int_\Omega |\eta D^2
  u|^2\,dx\right)^{1/2}
+\overline{K}(1+\|\grad \eta\|_\infty)\|f\|_{L^2(\Omega)}.  
\end{align*}
We can now complete the proof of the smooth case as before.  

The remainder of the proof goes through before, only now we apply the
generalized H\"older's inequality with $\Psi(t)$ and
$\Phi(t)=\exp(t^2)-1$ and then directly apply Trudinger's inequality.
\end{proof}

\begin{remark}
Note that in the proof of Theorem~\textup{\ref{thm:n=2-off-diag}} we use
the regularity assumption on $\grad A$ in the proof of the smooth
case, and use the higher integrability assumption on $A$ in the density
argument to prove the general case.
\end{remark}

\bibliographystyle{plain}
\bibliography{strong-deriv}

\end{document}